\newtheorem{Theorem}{Theorem}
\newtheorem{prop}{Proposition}
\newtheorem{mydef}{Definition}[]
\newtheorem{cor}{Corollary}
\newtheorem{rem}{Remark}
\begin{document}




\title{The multi-period $p$-center problem with time-dependent travel times}






\author[1]{Tobia Calogiuri} 
 \author[1]{Gianpaolo Ghiani}
 \author[1]{ Emanuela Guerriero\thanks{corresponding author}}
  \author[1]{Emanuele Manni}
\affil[1]{Dipartimento di Ingegneria dell'Innovazione,  Universit\`{a} del Salento, \\Via per Monteroni, 73100 Lecce, Italy }
\providecommand{\keywords}[1]{\textbf{\textit{Keywords---}} #1}

\date{}
\maketitle

\begin{abstract}
This paper deals with a multi-period extension of the $p$-center problem, in which arc traversal times vary over time, and facilities are mobile units that can be relocated multiple times during the planning horizon. \textcolor{black}{The problem arises in several applications, such as emergency services, in which vehicles can be relocated in anticipation of traffic congestion. First, we analyze the problem structure and its relationship with its single-period counterpart, including a special case. Then, the insight gained with this analysis is used to devise a decomposition heuristic.} Computational results on instances based on the Paris (France) road graph indicate that the algorithm is capable of determining good-quality solutions in a reasonable execution time.
\end{abstract}

%

%


\section{Introduction}\label{intro}
\textcolor{black}{Travel times constitute a key factor when locating service units. The recent availability of detailed traffic data (e.g., Google Traffic) makes it possible to extract historical traffic patterns over various time intervals. 
Such information can be used to adapt location decisions to the varying traffic conditions in order to improve  performance measures. One application that can benefit of such innovations is the location of emergency units, such as ambulances or fire stations \citep{BELANGER20191}}. 
\textcolor{black}{This paper deals with a multi-period location problem, termed \textit{multi-period $p$-center problem with time-dependent travel time} (M$p$CP-TD), in which arc traversal times vary over time, and facilities are mobile units that can be relocated multiple times during the planning horizon. The problem is deterministic in nature since we assume that travel time functions are known as well as the location of users.} \textcolor{black}{ We neglect service dynamics, i.e., we do not take into account the (possible) fleet reduction occurring when some vehicles are responding to demands. Moreover, since users and facilities are located in the same urban area,  we also neglect the relocation dynamics (see, eg. \citep{schmid2010ambulance}).}

\textcolor{black}{ 
In order to highlight the peculiarities of this problem we present an example inspired by the seminal paper of \cite{berman1982locating}. An urban area is served by four emergency vehicles. The service territory is divided into two areas (named A and B) connected by a limited number of streets. 
During off-peak hours, the mobile units are uniformly located (see Figure \ref{fig:sub-first}). During peak hours the streets connecting the two zones become heavily congested. As a result, it is wiser to relocate one unit in zone B and redistribute the remaining units uniformly in zone A (see Figure \ref{fig:sub-second}). It is worth noting that, unlike  \cite{berman1982locating}, in our problem travel times change according to \textit{historical} traffic patterns. Indeed, as stated by \cite{malandraki1992}, variability in travel times has two main components. The first  derives from hourly, daily, weekly and seasonal effects. These traffic factors, modeled by deterministic time-dependent functions, are taken into account in this paper. The
second component of variations which covers random events (such as accidents and weather conditions) is not included in our model.
}
\begin{figure}[t]
\begin{subfigure}{.5\textwidth}
  \centering
  \begin{tikzpicture}
[scale=0.60]
\draw[] (0,1) -- (6,1) -- (6,5) -- (0,5) -- (0,1);
\draw (0,0) node[right]{A};
\draw[] (6.5,1) -- (8.5,1) -- (8.5,5) -- (6.5,5) -- (6.5,1);
\draw (6.5,0) node[right]{B};
\draw[->] (5.5,4) -- (7,4);
\draw[<-] (5.5,3.5) -- (7,3.5);
\draw[->] (5.5,2.5) -- (7,2.5);
\draw[<-] (5.5,2) -- (7,2);
\draw (1.5,2) node{$\times$};
\draw (4.5,2) node{$\times$};
\draw (1.5,4) node{$\times$};
\draw (4.5,4) node{$\times$};
\end{tikzpicture}
 \caption{Locations during off-peak hours}
  \label{fig:sub-first}
\end{subfigure} 
\begin{subfigure}{.5\textwidth}
  \centering
\begin{tikzpicture}
[scale=0.60]
\draw[] (0,1) -- (6,1) -- (6,5) -- (0,5) -- (0,1);
\draw (0,0) node[right]{A};
\draw[] (6.5,1) -- (8.5,1) -- (8.5,5) -- (6.5,5) -- (6.5,1);
\draw (6.5,0) node[right]{B};
\draw[->] (5.5,4) -- (7,4);
\draw[<-] (5.5,3.5) -- (7,3.5);
\draw[->] (5.5,2.5) -- (7,2.5);
\draw[<-] (5.5,2) -- (7,2);
\draw (1.5,2) node{$\times$};
\draw (4.5,2) node{$\times$};
\draw (3,4) node{$\times$};
\draw (7.5,3) node{$\times$};
\end{tikzpicture}  
\caption{Locations during peak hours}
  \label{fig:sub-second}
\end{subfigure}
\caption{\textcolor{black}{An example inspired by the seminal paper of \cite{berman1982locating}}}
\end{figure}

\textcolor{black}{The single-period counterpart of our problem is the} $p$-center problem (\textit{p}CP) which requires to locate a set of $p$ facilities and serve a set of demand sites from the selected locations, with the objective of minimizing the maximum service time between a customer and its assigned facility \citep[see, e.g.,][]{hakimi1964optimum,minieka1970m}. The optimal value is called the \textit{radius}. Contributions in literature usually deal with two main variants: the \textit{vertex} \textit{p}CP, considered here, where the facilities can be located only on the vertices \citep{kariv1979algorithmic}, and the \textit{absolute} \textit{p}CP, in which the facilities can be located on the vertices or on the edges of a graph \citep{CALLAGHAN2017722}. Since the \textit{p}CP is NP-hard \citep{kariv1979algorithmic}, extensive literature exists proposing heuristic methods  based on several paradigms such as tabu search \citep{doi:10.1002/net.10081}, variable neighborhood search \citep{irawan2016hybrid}, and other techniques \citep{tansel2011discrete}. As far as exact \textit{p}CP methods are concerned, the most successful approaches are built on the solution of a series of covering subproblems with the help of reduction and preprocessing techniques \citep[see, e.g.,][]{daskin2000new,chen2009new,calik2013double}. To the best of our knowledge, the most recent exact approach for solving the vertex \textit{p}CP is presented in  \cite{contardo2019scalable}, where the authors  introduce a scalable relaxation-based iterative algorithm. For a comprehensive review, the interested reader can refer to \cite{calik2015p}.

\textcolor{black}{Regarding multi-period location problems, the corresponding planning problems are referred to as \textit{implicit} if all facilities are opened once at the beginning of the planning horizon. In this case, a plan is devised for reallocating demand sites to the selected facilities at specific time instants in response to changes in parameters over time. If facilities are opened and/or closed throughout the planning horizon, then the corresponding planning problems are referred to as \textit{explicit}. In this case a plan is devised for both relocating facilities and reallocating demand sites to the relocated facilities. Recent contributions on multi-period location problems are \cite{Castro2017}, \cite{Escudero2018}, \cite{RAGHAVAN2019507}, and \cite{GUDEN2019615}. In the application context of emergency services, there exists a number of contributions on multi-period \textit{covering} location problems, taking into account the time-dependent variations in travel times and the resulting changes with respect to the corresponding coverage (see, e.g.  \cite{VANDENBERG2015383,schmid2010ambulance,degel2015time,REPEDE1994567,RAJAGOPALAN2008814}}. As stated in \cite{BELANGER20191}, it has become of the highest importance for location analysis to provide approaches seeking equity/fairness as an objective when providing social aid/services. From this point of view,  \textit{p}CP aligns social aid/service to the Rawlsian approach, named after the philosopher John Rawls, which aims to minimize the worst-off served point. In a time-dependent setting, just using a  classical (\textit{single-period}) \textit{p}-center model with maximum (\textit{fixed}) travel times is no longer suitable to model changes of worst-off served points occurring during the planning horizon. \textcolor{black}{ The M$p$CP-TD aims to smooth this loss of geographical equity, trying to keep the centers close to the \textit{time-dependent} worst-off served points during the planning horizon. The planning horizon is divided into several time periods each of which defining specific moments for making adjustments in the system. The goal of the  problem is to locate facilities in such a way that the  sum of the largest service times associated with all the time periods is minimized. } 

To the best of our knowledge, this is the first contribution on multi-period \textit{p}CP with time-dependent travel times (M\textit{p}CP-TD). For a comprehensive review,  the interested reader can refer to  \cite{nickel2015multi}.  

The remainder of the paper is organized as follows. \textcolor{black}{In Section \ref{formulation} we describe the M$p$CP-TD in detail and formulate it.  Section \ref{property} presents  a property of time-dependent graphs that we call \textit{arc ranking invariance}, which can be exploited to solve the M\textit{p}CP-TD.  In Section \ref{framework}, we present a heuristic solution method and discuss about optimality conditions. Computational tests are reported in Section \ref{results}. The paper ends with an overview of the work done and some conclusions in Section \ref{Conclusions}.}

\textcolor{black}{\section {Problem statement}\label{formulation}
Let $G = (F,C,E)$ be a directed graph, where $F $ and $C $ are the sets of candidate locations of facilities and customer nodes, respectively, and $E = \{(i, j) : i \in F, j \in C\}$ is the set of arcs. 
Let $\tau_{ij}(t)$ be the travel duration of arc $(i, j) \in E$ when demand site $j$  is served from facility $i$ at time $t$ of the planning horizon $[0,T]$. 
We suppose that the traversal times are continuous piecewise linear functions satisfying the \textit{first-in-first-out} (FIFO) property, i.e., leaving the facility $i$ later implies arriving later at demand site $j$. 
According to a multi-period modeling approach, we suppose that $\mathcal{T}$ represents  the set of $M$ time instants during which a relocation of $p$ facilities might be planned, that is:
$$\mathcal{T}=\{t_1,\dots,t_{M-1},t_M\},$$
where  $t_0=0<t_1<\dots<t_{M-1}<t_M<t_{M+1}=T$. \\
In a time-dependent setting, we need to characterize how the worst-case service time varies over time. In particular, we model the time variability of the worst-case service time of each arc as a constant stepwise function, whose \textit{pieces} are the maximum service times between two consecutive possible relocation time instants of $\mathcal{T}$. For this purpose,  we denote with $d_{ij}(\mathcal{I})$  the worst service time of arc $(i,j)\in E$ as:
$$d_{ij}(\mathcal{I})=\max\limits_{t\in\mathcal{I}}\tau_{ij}(t),$$
where $\mathcal{I}$ is a time interval.\\
\indent Given $p$ facilities, a number of $M$ potential time periods, and a value $K \leq M$, the aim of the M$p$CP-TD is to determine a sequence of $K$ \textit{location-allocation} decisions such that the facilities are relocated $K$ times and the sum of the maximum (\textit{worst}) service times associated with each period is minimized.}

\indent  Let $(O_\ell,S_\ell,\mathcal{I}_\ell)$ denote \textit{location-allocation} decisions taken at the beginning of the time period $\mathcal{I}_\ell=[t_\ell,t_{\ell+1}]$, with $t_\ell$ and $t_{\ell+1}$ being two consecutive time instants of $\mathcal{T}$ and $\ell=0,\dots,M$. 
 The \textit{location} component is  the subset   $O_\ell$ modeling the $p$ \textit{open} facilities, that is $O_\ell\subseteq F$ and $|O_\ell|=p$,  with $\ell=0,\dots,M$. The \textit{allocation} component is  encoded as a  vector  $S_\ell$, where the $i$-th element $S_\ell[i]$ is the subset of the demand sites served from the \textit{open} facility $i\in O_\ell$, with  $\ell=0,\dots,M$.
We suppose that, given a time interval $\mathcal{I}_\ell$ and a set of open facilities $O_\ell$, the corresponding vector of allocation decisions  $S_\ell$ is univocally determined by assigning each costumer $j\in C$ to exactly one of the closest open facilities $i\in O_\ell$,  that is:
\begin{equation}\label{seed_loc} 
\textcolor{black}{i\in\arg\min(d_{sj}(\mathcal{I}_\ell)| s\in O_\ell)\Rightarrow j\in S_\ell[i],}
\end{equation}
with $\ell=0,\dots,M$. We synthetically refer to such univocal relationship by asserting that, during time period $\mathcal{I}_\ell$, $O_\ell$ is the $seed$ location decision of $S_\ell$, with $\ell=0,\dots,M$. The maximum service time of $(O_\ell,S_\ell,\mathcal{I}_\ell)$  is denoted with  $r(O,S,\mathcal{I}_\ell)$,  where:
\begin{equation}\label{R_1}
\textcolor{black}{r(O_\ell,S_\ell,\mathcal{I}_\ell)=\max(d_{ij}(\mathcal{I}_\ell)|i \in O_\ell \wedge j \in S_\ell[i]),}
\end{equation}
with $\ell=0,\dots,M$. For the sake of simplicity, from now on, when it is clear we are referring to the time period $\mathcal{I}_\ell$, we denote the corresponding \textit{location-allocation} decisions as $(O_\ell,S_\ell)$, with $\ell=0,\dots,M$.\\

\indent In the M\textit{p}CP-TD, the decision maker is interested in determining a sequence of location-allocation decisions,  one for each time period  $\mathcal{I}_\ell$, with $\ell=0,\dots,M$. We encode such sequence as a pair of vectors $(\mathbf{O}, \mathbf{{S}})$ where:
$$\mathbf{O}=[O_0,\dots,O_M],\quad \mathbf{S}=[S_0,\dots,S_M].$$
The criterion for evaluating each solution $(\mathbf{O}, \mathbf{{S}})$ is the sum of the maximum service times $R(\mathbf{O}, \mathbf{{S}})$, defined as:
$$ R(\mathbf{O}, \mathbf{{S}})=\sum\limits_{\ell=0}^{M} r(O_\ell,S_\ell).$$
We say that  $(\mathbf{O}, \mathbf{{S}})$ prescribes relocations over $\mathcal{T}$, if there exist at least two distinct \textit{seed} location decisions, that is if there exists $\ell$ such that $O_{\ell-1}\neq O_\ell$, with $\ell=1,\dots,M$. Let us denote with $n(\mathbf{O}, \mathbf{{S}})$ the total number of  relocations prescribed by $(\mathbf{O}, \mathbf{{S}})$. We suppose that the decision maker requires that $n(\mathbf{O}, \mathbf{{S}})$ is limited to $K$, with $0\leq K\leq M$.  
Given the former parameters and notation, the M\textit{p}CP-TD can be expressed synthetically as: 
\begin{equation}\label{phi_k_n}
\mathbf{\Phi}(\mathcal{T},K)=\min\limits_{\mathbf{(O,S)}}\left(R(\mathbf{O},\mathbf{S})|n(\mathbf{O},\mathbf{S})\leq K\right).
\end{equation}
\indent In the following sections, we illustrate a decomposition algorithm for solving the optimization problem (\ref{phi_k_n}). The main underlying idea of the proposed heuristic is that each feasible solution of (\ref{phi_k_n}) models a set of \textit{location-allocation} decisions taken according to a two-stage \textit {nested} approach. At the first stage a subset of $K$ time instants $\mathcal{T}_R\subseteq\mathcal{T}$ is selected. At the second stage, a set of $K$ \textit{implicit} multi-period location problems is solved in order to determine one \textit{seed} location decision  for each time instant in $\mathcal{T}_R$. 
To ease the description of the proposed heuristic algorithm, we provide an alternative formulation of the M\textit{p}CP-TD as a sequential (\textit{nested}) decision-making process. In the following, such formulation is illustrated by distinguishing two cases referred to as \textit{implicit} multi-period formulation and \textit{explicit} multi-period formulation, respectively.

\subsection{The \textit{implicit} multi-period formulation}
As stated in the literature, the \textit{implicit} multi-period formulation of a location problem implies that no relocations are allowed during the planning horizon, that is $K=0$. In this case the first stage decision can be skipped and  the second stage decision requires the solution of the location problem $\mathbf{\Phi}(\mathcal{T},0)$. 
\begin{mydef}\label{feas_sol_k_0}
A solution $(\mathbf{O}, \mathbf{{S}})$ is feasible  for $\mathbf{\Phi}(\mathcal{T},0)$ if it is characterized by a single  \textit{seed} location decision $O_0$  to be taken at the beginning of the planning horizon, that is $O_\ell=O_0$ with $\ell=1,\dots, M$. 
\end{mydef}
\noindent To ease the discussion, we reformulate $\mathbf{\Phi}(\mathcal{T},0)$ making use of the decision variables of  the classic $p$-center formulation \citep[see,][]{doi:10.1002/9781118032343.ch5} in order to model a solution $(\mathbf{O}, \mathbf{{S}})$. Let $y_i$ be a binary variable modeling the \textit{seed} location decision $i\in O_0$, that is $y_i$ takes value 1 if, during  the planning horizon $[0,T]$, facility $i\in F$ is open and 0 otherwise. The binary variable $x_{ij\ell}$ states whether or not customer $j \in C$ is assigned to facility $i \in F$ during time interval  $\mathcal{I}_\ell$, that is $j\in S_{\ell}[i]$, with $\ell=0,\dots,M$. The continuous variable $r_\ell$ represents the maximum service time (\ref{R_1}), with respect to  period $\mathcal{I}_\ell$, with $\ell=0,\dots,M$. Then, the problem can be formulated as:
\textcolor{black}{\begin{equation} \label{tdcpc_01}
\mathbf{\Phi}(\mathcal{T},0):=\quad\min \sum\limits_{\ell=0}^{M} r_{\ell}
\end{equation} 
s.t.
{\allowdisplaybreaks
\begin{flalign}\label{tdcpc_02}
&\sum\limits_{i\in F}x_{ij\ell}=1  \quad\quad\quad\quad\quad j\in C,\ell=0,\dots,M & \\
\label{tdcpc_03}
&y_{i}\geq x_{ij\ell}  \quad\quad\quad\quad\quad\quad i \in F,j \in C,\ell=0,\dots,M& \\
\label{tdcpc_04}
&\sum\limits_{i\in F}y_{i}= p &  \\
\label{tdcpc_05}
&r_{\ell }\geq \sum\limits_{i\in F}d_{ij}(\mathcal{I}_\ell) x_{ij\ell}  \quad\quad j\in C,\ell=0,\dots,M& \\
\label{tdcpc_06}
&x_{ij\ell}\in\{0,1\}  \quad\quad\quad\quad\quad i\in F,j \in C,\ell=0,\dots,M&\\
\label{tdcpc_07}
&y_{i}\in\{0,1\} \quad\quad\quad\quad\quad\quad i\in F& \\
\label{tdcpc_08} 
&r_{\ell }\geq0  \quad\quad\quad\quad\quad\quad\quad\quad \ell=0,\dots,M.&
\end{flalign}
}}
Objective function (\ref{tdcpc_01}) models the sum of maximum service times $R(\mathbf{O},\mathbf{S})$. Constraints (\ref{tdcpc_02}) ensure that each customer is assigned to one facility. Constraints (\ref{tdcpc_03}) state that a facility is open when at least one customer is allocated to it. Constraint (\ref{tdcpc_04}) states that the total number of facilities  to be opened is $p$. Constraints (\ref{tdcpc_05}) force $r_{\ell}$ to be greater than or equal to the service time from any customer to its assigned facility. Constraints (\ref{tdcpc_06})-(\ref{tdcpc_08}) provide the binary and non-negative conditions on decision variables.
\subsection{The \textit{explicit} multi-period formulation}
If $K>0$, then facilities can be opened and/or closed throughout the planning horizon. As discussed in Section \ref{intro},  in the  literature this case is referred to as \textit{explicit} multi-period modeling approach.
In particular, each feasible solution of $\mathbf{\Phi}(\mathcal{T},K)$ is associated with a subset $\mathcal{T}_R=\{t_{\sigma(1)},\dots,t_{\sigma(K)}\}\subseteq\mathcal{T}$ of $K$ relocation time instants, with $t_0=t_{\sigma(0)}<t_{\sigma(1)}<t_{\sigma(2)}\dots<t_{\sigma(K)}$. 
We observe that $\mathcal{T}_R$ induces a partition of the planning horizon into $K$ macro periods, where  each macro-period $\mathcal{I}^k_R$ starts and ends at two consecutive time instants in $\mathcal{T}_R$. We also observe that $\mathcal{T}_R$  induces a partition  of $\mathcal{T}$ in  $K+1$ subsets $\mathcal{T}^k_R$, each consisting of $M_k$ time instants, with $\sum_{k=0}^{K}M_k=M$. \textcolor{black}{In words, location-allocation decisions at time istant $t_{\sigma(k)}$ remain fixed for all time instants belonging to $\mathcal{T}_R^k$, with $k=0,\dots,K$.} 
We synthetically express the \textit{explicit} multi-period formulation of M\textit{p}CP-TD as:
\begin{equation}\label{phi_k_t}
\mathbf{\Phi}(\mathcal{T},K)=\min\limits_{\mathcal{T}_R}\left(\sum\limits_{k=0}^{|\mathcal{T}_R|}\mathbf{\Phi}(\mathcal{T}^k_R,0)| \mathcal{T}_R\subseteq\mathcal{T}:|\mathcal{T}_R|= K\right),
\end{equation}
where each $\mathbf{\Phi}(\mathcal{T}^k_R,0)$ can be formulated according to model (\ref{tdcpc_01})-(\ref{tdcpc_08}) by substituting the role of the set of time instants $\mathcal{T}$ and the planning horizon $[0,T]$, respectively, with the subset $\mathcal{T}^k_R$ and the macro-period $\mathcal{I}^k_R$, with $k=0,\dots,K$.  We observe that, given a subset $\mathcal{T}_R$, the \textit{inner} optimization problem of (\ref{phi_k_t}) can be decomposed in $K+1$ independent subproblems. Let $(\mathbf{O}(\mathcal{T}_R),\mathbf{S}(\mathcal{T}_R))_k$ denote a feasible solution of  $\mathbf{\Phi}(\mathcal{T}^k_R,0)$, with $k=0,\dots,M$. Quite naturally, we can extend Definition \ref{feas_sol_k_0} as follows. 
\begin{mydef}\label{Def_1}
Let  $(\mathbf{O}(\mathcal{T}_R),\mathbf{S}(\mathcal{T}_R))_k$ denote a feasible solution of $\mathbf{\Phi}(\mathcal{T}^k_R,0)$, stating that one single \textit{seed} location decision $O_{\sigma(k)}$ must be taken at the beginning of the macro-period $\mathcal{I}^k_R$, i.e., $O_\ell=O_{\sigma(k)}$ with $\mathcal{I}_\ell\subseteq\mathcal{I}^k_R$ and $k=0,\dots,K$.
\end{mydef}
\noindent According to (\ref{phi_k_t}) a feasible solution $(\mathbf{O}(\mathcal{T}_R),\mathbf{S}(\mathcal{T}_R))$ of  $\mathbf{\Phi}(\mathcal{T},K)$ can be modeled as a sequence of feasible solutions of the \textit{inner} optimization subproblems, that is:
\begin{equation}\label{alt_O}
(\mathbf{O}(\mathcal{T}_R),\mathbf{S}(\mathcal{T}_R))=[(\mathbf{O}(\mathcal{T}_R),\mathbf{S}(\mathcal{T}_R))_0,\dots,(\mathbf{O}(\mathcal{T}_R),\mathbf{S}(\mathcal{T}_R))_K],
\end{equation}
We  observe that each feasible solution of $\mathbf{\Phi}(\mathcal{T},K)$ is also feasible for $\mathbf{\Phi}(\mathcal{T},M)$, with $0\leq K\leq M$. Indeed, according to formulation (\ref{phi_k_n}),  we have that:
$$n(\mathbf{O},\mathbf{S})\leq K\leq M.$$
This implies that $\mathbf{\Phi}(\mathcal{T},M)$ is a relaxation of $\mathbf{\Phi}(\mathcal{T},K)$.
\begin{rem}\label{Remark_1}
If an optimal  solution  for $\mathbf{\Phi}(\mathcal{T},M)$ prescribes $K$ relocations, then such solution is also optimal for 
 $\mathbf{\Phi}(\mathcal{T},K),\mathbf{\Phi}(\mathcal{T},K+1),\dots,\mathbf{\Phi}(\mathcal{T},M-1)$.
\end{rem}
\noindent Given a reference time interval $\mathcal{I}$, let us denote with $\phi(\mathcal{I})$ the (classical) single-period $p$-center problem defined on $G$ with the service time of  arc $(i,j)\in E$ equal to $d_{ij}(\mathcal{I})$, that is:
$$
\phi(\mathcal{I})=\min\limits_{ (O,S)}r(O,S,\mathcal{I}).
$$
\begin{rem}\label{Remark_2}
When $K=M$ the unique solution of the \textit{outer} optimization problem of (\ref{phi_k_t}) is $\mathcal{T}_R=\mathcal{T}$, with $M_k=1$ and $k=0,\dots,K$. This implies that the optimal solution of $\mathbf{\Phi}(\mathcal{T},M)$ can be determined by solving $M+1$ independent single-period (\textit{classical}) $p$-center problems, that is:
$$
\mathbf{\Phi}(\mathcal{T},M)=\sum\limits_{\ell=0}^{M}\phi(\mathcal{I_\ell}).
$$
\end{rem}
\noindent In the following sections,  we exploit Remarks \ref{Remark_1} and \ref{Remark_2} to devise a set of  sufficient optimality conditions.

\textcolor{black}{\section{The arc ranking invariance property}\label{property}
In this section we investigate a property of  time-dependent graphs that we call \textit{arc ranking invariance}, which can be exploited to solve the M\textit{p}CP-TD.
 As demonstrated in the following sections, when \textit{arc ranking invariance} holds, even if  the \textit{radius} of the graph $G$ is time dependent, the worst-off served demand site does not change during the planning horizon. In this case, there is no need to relocate facilities, i.e., it is suitable an \textit{implicit} multi-period modeling approach \citep{nickel2015multi}. \\
\indent Let us denote with $\mathcal{B}(\mathcal{T})$ the  partition of the planning horizon in $M+1$ time periods $\mathcal{I}_\ell=[t_\ell,t_{\ell+1}]$, with $t_\ell\in\mathcal{T}$ and $\ell=0,\dots,M$. 
\begin{mydef}\label{def_3} \textbf{Arc dominance rule over $\mathcal{B}(\mathcal{T})$.}  Given two arcs $(i,j)$ and $(r,s)$ of $G$ and their travel time functions, $\tau_{ij}(t)$ and $\tau_{rs}(t)$ respectively, we say that $\tau_{ij}(t)$ dominates $\tau_{rs}(t)$ over $\mathcal{B}(\mathcal{T})$ iff for any  time interval $\mathcal{I}_\ell\in\mathcal{B}(\mathcal{T})$:
$$d_{rs}(\mathcal{I}_\ell)\leq d_{ij}(\mathcal{I}_\ell),$$
with $\ell=0,\dots,M$.
\end{mydef}
\begin{mydef}\label{def_4} \textbf{Arc ranking invariance over $\mathcal{B}(\mathcal{T})$.} The time-dependent graph $G$ is ranking invariant over $\mathcal{B}(\mathcal{T})$, if the  arc dominance rule over $\mathcal{B}(\mathcal{T})$ holds for any pair of arcs $(i,j)\in E$ and $(r,s)\in E$.
\end{mydef}
In order to define sufficient conditions for the arc ranking invariance over $\mathcal{B}(\mathcal{T})$, we exploit results provided  in \cite{GRRGHN} and  \cite{cordeau2014analysis}.} In particular, \cite{GRRGHN} proved that any continuous piecewise linear FIFO travel time function can be generated from the model proposed by \cite{ichoua2003vehicle} (IGP model, for short). The authors also proposed an iterative method to determine the IGP parameters of $\tau_{ij}(t)$ on a reference time interval $\mathcal{I}$, that is a constant (dummy length) $L_{ij}$ and a constant stepwise (dummy speed) function $v_{ij}(t)\geq 0$, such that:
\begin{equation}\label{len}
L_{ij}=\int_{t}^{t+\tau_{ij}(t)}v_{ij}(\mu)d\mu,
\end{equation}
where $t\in\mathcal{I}$.  Let us suppose that the IGP parameters have been determined by applying the method proposed in \cite{GRRGHN} with the planning horizon  $[0,T]$ as reference time interval. Without loss of generality, we suppose that all breakpoints of $v_{ij}(t)$ belong to the set of time instants $\mathcal{T}$, that is 
$$v_{ij}(t)=v_{ij\ell},$$
with $t\in \mathcal{I}_{\ell}$, $(i,j)\in E$,  $\ell=0,\dots,M$. Then, we can determine the  factorization of the IGP speeds proposed by \cite{cordeau2014analysis}, that is :
\begin{equation}\label{fac}
v_{ij\ell}=u_{ij} b_{\ell} \delta_{ij\ell},\qquad (i,j) \in E, \ell=0,\dots,M,
\end{equation}
where:
\begin{itemize}
\item $u_{ij}$ is the maximum travel speed across arc $(i,j)\in E$ during the planning horizon $[0,T]$, i.e., $u_{ij}=\max_{\ell=0,\dots,M} v_{ij\ell}$;
\item $b_{\ell} \in [0,1]$ is the best (i.e., lightest) congestion factor during interval $\mathcal{I}_\ell$ on the whole graph, i.e., $b_{\ell}=\max_{(i,j)\in A} v_{ij\ell} / u_{ij}$;
\item $\delta_{ij\ell} = \frac{v_{ij\ell} / u_{ij}}{ b_{\ell}}$ varies in $[0,1]$ and represents the degradation of the congestion factor of arc $(i,j)$ in interval $\mathcal{I}_{\ell}$ with respect to the less congested arc.
\end{itemize}
Let us denote with $\Delta=\min_{i,j,\ell}\delta_{ij\ell}$ the heaviest degradation of the congestion factor of any arc $(i,j)\in E$ during the planning horizon. The following theorem states a sufficient condition for the arc ranking invariance property over  $\mathcal{B}(\mathcal{T})$.
\begin{Theorem}\label{delta_1}
Given a time-dependent graph $G$,  if  $\Delta=1$, then the arc ranking invariance property holds  over  $\mathcal{B}(\mathcal{T})$.
\end{Theorem}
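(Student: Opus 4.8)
The plan is to show that the hypothesis $\Delta=1$ collapses the whole family of travel-time functions into a family that is \emph{totally ordered} under pointwise comparison, from which arc ranking invariance over any $\mathcal{B}(\mathcal{T})$ follows at once.

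First I would observe that, since every $\delta_{ij\ell}$ lies in $[0,1]$ and $\Delta=\min_{i,j,\ell}\delta_{ij\ell}=1$, we must have $\delta_{ij\ell}=1$ for every arc $(i,j)\in E$ and every $\ell=0,\dots,M$. Substituting into the factorization (\ref{fac}) gives $v_{ij\ell}=u_{ij}\,b_{\ell}$; that is, the dummy speed of every arc equals its own maximum speed $u_{ij}$ modulated by a single graph-wide, time-dependent congestion profile $b(\cdot)$ with $b(t)=b_{\ell}$ on $\mathcal{I}_{\ell}$. Because the functions $\tau_{ij}$ are continuous, piecewise linear and FIFO with finite values, each $b_{\ell}$ is strictly positive, so the cumulative profile $B(t):=\int_{0}^{t}b(\mu)\,d\mu$ is continuous and strictly increasing, hence invertible.

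Next I would rewrite the defining relation (\ref{len}) using this structure. Dividing (\ref{len}) by $u_{ij}$ and setting $\lambda_{ij}:=L_{ij}/u_{ij}$ yields $\lambda_{ij}=\int_{t}^{t+\tau_{ij}(t)}b(\mu)\,d\mu=B\bigl(t+\tau_{ij}(t)\bigr)-B(t)$ for every admissible $t$, i.e. $\tau_{ij}(t)=B^{-1}\bigl(B(t)+\lambda_{ij}\bigr)-t$. Since $B^{-1}$ is increasing, this shows that $\lambda_{ij}\ge\lambda_{rs}$ implies $\tau_{ij}(t)\ge\tau_{rs}(t)$ for all $t$; equivalently, one can argue directly that $s\mapsto\int_{t}^{s}b(\mu)\,d\mu$ is strictly increasing, so from $\int_{t}^{t+\tau_{rs}(t)}b(\mu)\,d\mu=\lambda_{rs}\le\lambda_{ij}=\int_{t}^{t+\tau_{ij}(t)}b(\mu)\,d\mu$ we get $t+\tau_{rs}(t)\le t+\tau_{ij}(t)$. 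In either case the travel-time functions are pointwise ordered by the scalars $\lambda_{ij}$.

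Finally I would close the argument: the numbers $\{\lambda_{ij}:(i,j)\in E\}$ are real, hence totally ordered, so for any pair of arcs $(i,j),(r,s)\in E$ we may assume without loss of generality $\lambda_{ij}\ge\lambda_{rs}$, whence $\tau_{rs}(t)\le\tau_{ij}(t)$ for all $t$. Taking the maximum over any subinterval $\mathcal{I}_{\ell}\in\mathcal{B}(\mathcal{T})$ preserves the inequality, so $d_{rs}(\mathcal{I}_{\ell})\le d_{ij}(\mathcal{I}_{\ell})$ for every $\ell=0,\dots,M$; by Definition \ref{def_3} this says $\tau_{ij}$ dominates $\tau_{rs}$ over $\mathcal{B}(\mathcal{T})$, and since the pair was arbitrary, by Definition \ref{def_4} the graph is ranking invariant over $\mathcal{B}(\mathcal{T})$. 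The main obstacle I anticipate is the middle step: extracting the closed form $\tau_{ij}(t)=B^{-1}(B(t)+\lambda_{ij})-t$ rigorously from (\ref{len}) — in particular justifying strict monotonicity of $B$ (which needs $b_{\ell}>0$, itself a consequence of the FIFO property and finiteness of travel times) and dealing with the case in which $t+\tau_{ij}(t)$ leaves the horizon $[0,T]$; everything after that is routine monotonicity bookkeeping.
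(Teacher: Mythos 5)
Your proposal is correct and follows essentially the same route as the paper: $\Delta=1$ forces $\delta_{ij\ell}=1$, so each travel time satisfies $L_{ij}/u_{ij}=\int_{t}^{t+\tau_{ij}(t)}b(\mu)\,d\mu$, and strict monotonicity of the integral in its upper limit yields the pointwise total order $\tau_{ij}(t)\le\tau_{rs}(t)\Leftrightarrow L_{ij}/u_{ij}\le L_{rs}/u_{rs}$, which passes to the interval maxima $d_{ij}(\mathcal{I}_\ell)$. Your write-up is in fact slightly more careful than the paper's, since it makes explicit the positivity of $b_\ell$ needed for strict monotonicity and the final step from pointwise ordering to the arc dominance rule.
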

\begin{proof}
Since $\Delta=1$, then in speed variation law (\ref{fac}) we have that $\delta_{ij\ell}=1$. This implies that for any given pair  of arcs $(i,j)\in E$ and $(r,s)\in E$ and for any start time $t\in[0,T]$, the relationship (\ref{len}) can be rewritten as:
$$
\frac{L_{ij}}{u_{ij}}=\int_{t}^{t+\tau_{ij}(t)}b(\mu)d\mu,
$$

$$
\frac{L_{rs}}{u_{rs}}=\int_{t}^{t+\tau_{rs}(t)}b(\mu)d\mu,
$$
where $b(t)=b_\ell$ with $t\in\mathcal{I}_{\ell}$, $\ell=0,\dots,M$.  We observe that:
\begin{equation}\label{order_1}
\frac{L_{ij}}{u_{ij}}=\int_{t}^{t+\tau_{ij}(t)}b(\mu)d\mu\leq\int_{t}^{t+\tau_{rs}(t)}b(\mu)d\mu=\frac{L_{rs}}{u_{rs}}\Leftrightarrow\tau_{ij}(t)\leq\tau_{rs}(t).
\end{equation}
Since (\ref{order_1}) holds for any start travel time $t\in [0,T]$, then the thesis is proved.
\end{proof}

\noindent In a typical time-dependent setting, the facility \textit{location/allocation} decisions must take into account that the underlying graph $G$ might satisfy the hypothesis of Theorem \ref{delta_1} only during portions of the planning horizon. In order to overcome this issue we allow facilities to be relocated $K$ times throughout
the planning horizon, in an anticipatory manner. Therefore, given a set of $K$ time instants $\mathcal{T}_R$, we can extend, in a quite natural way, the notation of  \cite{ichoua2003vehicle}  to the  $K+1$ macro periods $\mathcal{I}_R^k$, with $k=0,\dots,K$. In the following,  we suppose to apply the iterative method proposed in \cite{GRRGHN} to each arc $(i,j)\in E$, by considering the macro period $\mathcal{I}^k_R$ as reference time interval, with $k=0,\dots, K$. In this way, for each arc $(i,j)\in E$, we obtain $K+1$ distinct speed factorizations (\ref{fac}), one for each macro-period. Then, we compute $\Delta_k(\mathcal{T}_R)$, that is the heaviest degradation of the congestion factor of any arc $(i,j)\in E$ during macro-period $\mathcal{I}^k_R$, with $k=0,\dots, K$. Finally, we compute $\Delta(\mathcal{T}_R)=\min_k\Delta_k(\mathcal{T}_R)$. Corollary \ref{delta_2} states a sufficient condition for arc ranking invariance. For the sake of notational convenience, as we did for $\mathcal{T}$, we denote with  $\mathcal{B}(\mathcal{T}^k_R)$ the subset of time-periods in $\mathcal{B}(\mathcal{T})$ belonging to the macro-period $\mathcal{I}^k_R$, with $k=0,\dots, K$. \textcolor{black}{ In words, $\mathcal{B}(\mathcal{T}_R^k)$ is the subset of time periods during which location-allocation decisions at time instant $t_{\sigma(k)}$ remain fixed, with $k=0,\dots,K$.}
\begin{cor}\label{delta_2}
Given a time-dependent graph $G$,   and a subset of time instants $\mathcal{T}_R$, if  $\Delta(\mathcal{T}_R)=1$, then the arc ranking invariance property holds over  each $\mathcal{B}(\mathcal{T}^k_R)$, with $k=0,\dots K$.
\end{cor}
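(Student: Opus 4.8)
The plan is to reduce the corollary to a macro-period-by-macro-period application of Theorem~\ref{delta_1}. First I would unpack the hypothesis: by construction every degradation factor $\delta_{ij\ell}$ lies in $[0,1]$, so each $\Delta_k(\mathcal{T}_R)\le 1$; since $\Delta(\mathcal{T}_R)=\min_{k=0,\dots,K}\Delta_k(\mathcal{T}_R)=1$, a minimum of quantities each at most $1$ being equal to $1$ forces $\Delta_k(\mathcal{T}_R)=1$ for every $k=0,\dots,K$. This is the only place the global condition gets used, and it immediately turns a single hypothesis into $K+1$ local ones.

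Next I would fix an index $k$ and regard $G$ as a time-dependent graph over the macro-period $\mathcal{I}^k_R$, i.e., with the planning horizon $[0,T]$ replaced by $\mathcal{I}^k_R$ and the set of time instants $\mathcal{T}$ replaced by $\mathcal{T}^k_R$. As recalled in the paragraph preceding the statement, applying the iterative method of \cite{GRRGHN} with $\mathcal{I}^k_R$ as reference interval produces IGP parameters $L_{ij}$ and $v_{ij}(t)$ whose breakpoints lie in $\mathcal{T}^k_R$, together with the factorization (\ref{fac}) of the speeds relative to this macro-period, whose heaviest degradation over $\mathcal{I}^k_R$ is exactly $\Delta_k(\mathcal{T}_R)$. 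So this restricted instance satisfies the hypothesis $\Delta_k(\mathcal{T}_R)=1$ of Theorem~\ref{delta_1} verbatim.

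Then I would simply invoke Theorem~\ref{delta_1} on this restricted instance. Since $\Delta_k(\mathcal{T}_R)=1$, we have $\delta_{ij\ell}=1$ for all $(i,j)\in E$ and all $\mathcal{I}_\ell\subseteq\mathcal{I}^k_R$, so (\ref{len}) collapses to $L_{ij}/u_{ij}=\int_t^{t+\tau_{ij}(t)}b(\mu)\,d\mu$ (and likewise for $(r,s)$) for every $t\in\mathcal{I}^k_R$; the equivalence (\ref{order_1}) then shows that for any pair of arcs either $\tau_{ij}(t)\le\tau_{rs}(t)$ holds for all $t\in\mathcal{I}^k_R$ or the reverse holds for all such $t$. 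Taking maxima over each sub-interval $\mathcal{I}_\ell\in\mathcal{B}(\mathcal{T}^k_R)$ yields, for that pair, the arc dominance rule of Definition~\ref{def_3} over $\mathcal{B}(\mathcal{T}^k_R)$; since the pair was arbitrary, $G$ is arc ranking invariant over $\mathcal{B}(\mathcal{T}^k_R)$ in the sense of Definition~\ref{def_4}. Since $k$ was arbitrary, this holds for all $k=0,\dots,K$.

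The only delicate point — and the one I would check carefully — is whether the argument of Theorem~\ref{delta_1} is genuinely scale-free in the choice of reference interval, rather than secretly using the endpoints $0$ and $T$. Inspecting that proof, the sole ingredient is the chain of equivalences (\ref{order_1}), which is valid for any start time inside the reference interval and uses only nonnegativity of $b(\cdot)$; hence replacing $[0,T]$ by $\mathcal{I}^k_R$ is harmless. I therefore expect no computational obstacle: the whole content is the observation that $\Delta(\mathcal{T}_R)=1$ propagates to each $\Delta_k(\mathcal{T}_R)=1$, plus careful bookkeeping to line up the restricted IGP data with the statement of Theorem~\ref{delta_1}.
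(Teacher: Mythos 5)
Your proposal is correct and follows essentially the same route as the paper: observe that $\Delta_k(\mathcal{T}_R)\in[0,1]$ for each $k$, so $\Delta(\mathcal{T}_R)=\min_k\Delta_k(\mathcal{T}_R)=1$ forces $\Delta_k(\mathcal{T}_R)=1$ for every $k$, and then apply Theorem~\ref{delta_1} macro-period by macro-period. Your extra check that the argument of Theorem~\ref{delta_1} does not secretly depend on the reference interval being $[0,T]$ is a worthwhile piece of diligence that the paper leaves implicit, but it does not change the approach.
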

\begin{proof}
Since each $\Delta_k(\mathcal{T}_R)\in [0,1]$, by the hypothesis we have that $\Delta_k(\mathcal{T}_R)=1$ for all $k=0,\dots,K$. Then, from Theorem \ref{delta_1}, the thesis is proved.
\end{proof}

\noindent In the following, we propose a heuristic solution method that aims to determine a subset $\mathcal{T}_R$ such that $\Delta(\mathcal{T}_R)\simeq1.$ \textcolor{black}{ We also prove that when $\Delta(\mathcal{T}_R)=1$, then the proposed solution approach is optimal. }

\section{Solution approach}\label{framework}
According to (\ref{alt_O}), in order to outline a heuristic algorithm, we need to devise  two hierarchical \textit{nested} phases. During the first phase the algorithm determines a subset $\mathcal{T}_R$. During the second phase, we determine a sequence of $K+1$ \textit{seed} location decisions $[O_{\sigma(0)},\dots,O_{\sigma(K)}]$, one for each time instant included in $\mathcal{T}_R$. The two phases are described in the following. \\ \\
\textbf{Phase I.} \textit{Choose a subset of time instant $\mathcal{T}_R\subseteq \mathcal{T}$}. This step aims to determine a solution for the \textit{outer} optimization problem (\ref{phi_k_t}), which could be modeled  as a set partitioning problem on $\mathcal{T}$. The main issue is how to determine a cost partitioning function approximating the sum of the optimal values  $\mathbf{\Phi}(\mathcal{T}^k_R,0)$, with $k=0,\dots,K$. To overcome this drawback, we exploit  a set of optimality conditions stating that if  a subset $T_R$ satisfies the hypothesis of Corollary 1, then the second phase of our heuristic method determines the optimal solution.  For this reason, during Phase I, we solve  \textcolor{black}{a  binary program} aiming to determine the subset  $\mathcal{T}_R$ maximizing a proxy value of $\Delta(\mathcal{T}_R)$. Finally, we observe that this phase can be skipped for the special cases $\mathbf{\Phi}(\mathcal{T},M)$ and $\mathbf{\Phi}(\mathcal{T},0)$, where there exists a single solution to the \textit{outer} optimization problem of (\ref{phi_k_t}), that is, $\mathcal{T}_R=\mathcal{T}$ and $\mathcal{T}_R=\emptyset$, respectively.
\\ \\
\textbf{Phase II.} \textit{Choose one seed selection decision for each $\mathbf{\Phi}(\mathcal{T}^k_R,0)$, with $k=0,\dots,K$.} Given the subset  $\mathcal{T}_R$  selected during Phase I, this step aims to determine one feasible solution $(\bar{\mathbf{O}}(\mathcal{T}_R),\bar{\mathbf{S}}(\mathcal{T}_R))_k$ for each \textit{inner} optimization independent sub-problem of (\ref{phi_k_t}).  Let $\bar{O}_k$  denote the location decision prescribed by the optimal solution  of the \textit{classical} p-center problem $\phi(\mathcal{I}^k_R)$, with $k=0,\dots,K$. The \textit{seed} location decision of  $(\bar{\mathbf{O}}(\mathcal{T}_R),\bar{\mathbf{S}}(\mathcal{T}_R))_k$ is set equal to $\bar{O}_k$.
\\ \\
Finally, the set of $K+1$ solutions determined during  Phase II are converted in a complete solution  $(\bar{\mathbf{O}}(\mathcal{T}_R),\bar{\mathbf{S}}(\mathcal{T}_R))$ according to (\ref{alt_O}).

\subsection{Linking arc ranking invariance and optimality}
If during Phase I, the subset $\mathcal{T}_R$ is  empty, then  $(\bar{\mathbf{O}}(\emptyset),\bar{\mathbf{S}}(\emptyset))$  prescribes no relocation, that is  $n(\bar{\mathbf{O}}(\emptyset),\bar{\mathbf{S}}(\emptyset))=0$. In the following, we prove a sufficient optimality condition for this special case. 

\begin{prop}\label{Prop_AG0}
Given a set of time instants $\mathcal{T}$, if the time-dependent graph $G$ is ranking invariant over $\mathcal{B}(\mathcal{T})$, then any feasible solution of $\mathbf{\Phi}(\mathcal{T},0)$ prescribes the same \textit{location-allocation} decision for each time period, that is:
$$
(O_\ell,S_\ell)=(O_0,S_0),
$$
with $\ell=1,\dots,M$. 
\end{prop}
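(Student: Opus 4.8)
The location part of the claim comes for free: by Definition \ref{feas_sol_k_0}, a solution feasible for $\mathbf{\Phi}(\mathcal{T},0)$ already satisfies $O_\ell=O_0$ for every $\ell=1,\dots,M$. Hence the whole statement reduces to proving that the allocation vectors satisfy $S_\ell=S_0$ for all $\ell$; and since, by the seed relationship (\ref{seed_loc}), $S_\ell$ is univocally determined by $O_0$ together with the worst service times $d_{\cdot j}(\mathcal{I}_\ell)$, it suffices to show that for every customer $j\in C$ the open facility of $O_0$ to which $j$ is assigned does not depend on $\ell$.

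The plan is to convert arc ranking invariance into a period-independent ranking of the facilities as seen from each customer. Fix $j\in C$ and consider the arcs $\{(i,j):i\in O_0\}$, all sharing the head $j$. Because $G$ is ranking invariant over $\mathcal{B}(\mathcal{T})$ (Definition \ref{def_4}), the arc dominance rule of Definition \ref{def_3} holds for every pair of these arcs, so for any $i_1,i_2\in O_0$ at least one of the following holds: $d_{i_1 j}(\mathcal{I}_\ell)\le d_{i_2 j}(\mathcal{I}_\ell)$ for all $\ell$, or $d_{i_2 j}(\mathcal{I}_\ell)\le d_{i_1 j}(\mathcal{I}_\ell)$ for all $\ell$. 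Consequently the relation $i_1\preceq_j i_2\Leftrightarrow d_{i_1 j}(\mathcal{I}_\ell)\le d_{i_2 j}(\mathcal{I}_\ell)\ \forall\ell$ is a total preorder on $O_0$ that does \emph{not} vary with $\ell$.

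Then I would take a $\preceq_j$-smallest facility $i^\ast(j)\in O_0$ (it exists since $\preceq_j$ is total on a finite set): by definition $d_{i^\ast(j)\,j}(\mathcal{I}_\ell)\le d_{ij}(\mathcal{I}_\ell)$ for every $i\in O_0$ and every $\ell$, so $i^\ast(j)\in\arg\min(d_{ij}(\mathcal{I}_\ell)\mid i\in O_0)$ in every period. Conversely, any facility attaining this minimum in some period is $\preceq_j$-equivalent to $i^\ast(j)$ and therefore attains it in every period, so the whole $\arg\min$ set (the minimal $\preceq_j$-class) is period-independent. Feeding this fixed set of closest facilities into the univocal selection rule (\ref{seed_loc}) gives, for each $i\in O_0$, the equivalence $j\in S_\ell[i]\Leftrightarrow j\in S_0[i]$. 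As $j$ was arbitrary, $S_\ell=S_0$ for all $\ell$, which together with $O_\ell=O_0$ yields $(O_\ell,S_\ell)=(O_0,S_0)$ for $\ell=1,\dots,M$.

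The one point that needs care is the treatment of ties: ranking invariance only forces a consistent \emph{weak} order, so for a fixed $\ell$ the $\arg\min$ can be strictly larger than the minimal $\preceq_j$-class. This is immaterial provided the tie-break implicit in the ``univocal determination'' of $S_\ell$ is applied consistently across periods (equivalently, under the customary non-degeneracy assumption that the values $d_{ij}(\mathcal{I}_\ell)$, $i\in O_0$, are pairwise distinct for each $j$, in which case every $\arg\min$ is a singleton). I would state this hypothesis explicitly; with it the argument above is complete, the only real work being the translation of Definition \ref{def_3} into the period-independent preorder $\preceq_j$.
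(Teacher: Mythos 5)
Your proof is correct and follows essentially the same route as the paper's: reduce the claim via Definition \ref{feas_sol_k_0} to showing $S_\ell=S_0$, and then derive this from ranking invariance together with the closest-facility allocation rule (\ref{seed_loc}). The paper's own proof simply asserts the implication $O_\ell=O_{\ell'}\Rightarrow S_\ell=S_{\ell'}$ as an immediate consequence of ranking invariance, whereas you actually supply the argument (the period-independent total preorder $\preceq_j$ on facilities seen from each customer); your final caveat about ties is a genuine subtlety that the paper's one-line proof silently assumes away, since a facility may tie for closest in one period without belonging to the $\preceq_j$-minimal class, so the $\arg\min$ in (\ref{seed_loc}) need not be period-independent and the conclusion really does require a consistent tie-break (or non-degeneracy), exactly as you state.
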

\begin{proof}
From the hypothesis on the ranking invariance over  $\mathcal{B}(\mathcal{T})$ and the \textit{seed} location definition  (\ref{seed_loc}), it descends that, given two distinct time periods $\mathcal{I}_\ell$ and $\mathcal{I}_\ell^\prime$:
$$
O_\ell=O_{\ell^\prime}\Rightarrow S_\ell=S_{\ell^\prime},
$$
with $\ell\neq\ell^\prime$ and  $\ell,\ell^\prime=0,\dots,M$.  According to (\ref{K_0}), each feasible solution of  $\mathbf{\Phi}(\mathcal{T},0)$ can be denoted as $(\mathbf{O}(\emptyset),\mathbf{S}(\emptyset))$. From Definition \ref{feas_sol_k_0} it follows that a feasible solution of  $\mathbf{\Phi}(\mathcal{T},0)$ is characterized by a unique \textit{seed} location decision, that is:
$$
(\mathbf{O}(\emptyset),\mathbf{S}(\emptyset))\Leftrightarrow O_\ell=O_0\quad\quad \ell=1,\dots,M.
$$
Then, the thesis is proved.
\end{proof}
Let us denote with $\hat{O}_\ell$ the location decision prescribed by the optimal solution of the single-period $p$-center problem $\phi(\mathcal{I_\ell})$, with $\ell=0,\dots,M$. 

\begin{prop}\label{Prop_AG1}
Given a set of time instants $\mathcal{T}$, if the time-dependent graph $G$ is ranking invariant over $\mathcal{B}(\mathcal{T})$, then the \textit{location-allocation}  decision $(\hat{O}_0,\hat{S}_0)$ is also optimal for any single-period (\textit{classical}) $p$-center problem $\phi(\mathcal{I}_\ell)$, with $\ell=1,\dots,M$.
\end{prop}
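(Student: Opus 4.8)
The plan is to use arc ranking invariance to collapse both $\phi(\mathcal{I}_0)$ and $\phi(\mathcal{I}_\ell)$ into the minimization, over the $p$-subsets of $F$, of the service-time function of a \emph{single} arc, and then to observe that ranking invariance forces all these one-dimensional minimizations to share a common minimizer. First I would record that, since the arc dominance rule holds for every pair of arcs (Definition \ref{def_4}), the relation ``$\tau_{ij}$ dominates $\tau_{rs}$ over $\mathcal{B}(\mathcal{T})$'' is a period-independent total preorder on $E$: for any two arcs, either $d_{ij}(\mathcal{I}_\ell)\le d_{rs}(\mathcal{I}_\ell)$ for all $\ell$, or the reverse holds for all $\ell$. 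Two consequences follow at once. (i) For a fixed open-facility set $O$, the set of closest open facilities of any customer $j$ — the $\arg\min$ in (\ref{seed_loc}) — is the same in every period, so, with the fixed tie-breaking rule, the induced seed allocation $S$ is period-independent; this is exactly the implication invoked in the proof of Proposition \ref{Prop_AG0}. (ii) Among the finitely many arcs $A(O)=\{(i,j):i\in O,\ j\in S[i]\}$ used by $(O,S)$, the total preorder has a maximum element $e^*(O)$ that dominates every arc of $A(O)$; hence $d_{e^*(O)}(\mathcal{I}_\ell)=\max_{(i,j)\in A(O)}d_{ij}(\mathcal{I}_\ell)=r(O,S,\mathcal{I}_\ell)$ for \emph{every} $\ell=0,\dots,M$. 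In words, the radius-defining arc of $(O,S)$ never migrates over the horizon.

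Using (ii), $\phi(\mathcal{I}_\ell)=\min_{O}\,d_{e^*(O)}(\mathcal{I}_\ell)$ for every $\ell$, the minimum being over all $O\subseteq F$ with $|O|=p$. Let $\hat{O}_0$ be an optimal solution of $\phi(\mathcal{I}_0)$, so $d_{e^*(\hat{O}_0)}(\mathcal{I}_0)\le d_{e^*(O)}(\mathcal{I}_0)$ for every feasible $O$. Fix such an $O$ and $\ell\in\{1,\dots,M\}$. By ranking invariance the arcs $e^*(\hat{O}_0)$ and $e^*(O)$ are comparable in the preorder; together with $d_{e^*(\hat{O}_0)}(\mathcal{I}_0)\le d_{e^*(O)}(\mathcal{I}_0)$ this forces $e^*(O)$ to dominate $e^*(\hat{O}_0)$, i.e. $d_{e^*(\hat{O}_0)}(\mathcal{I}_\ell)\le d_{e^*(O)}(\mathcal{I}_\ell)$. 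Consequently $r(\hat{O}_0,\hat{S}_0,\mathcal{I}_\ell)=d_{e^*(\hat{O}_0)}(\mathcal{I}_\ell)\le d_{e^*(O)}(\mathcal{I}_\ell)=r(O,S,\mathcal{I}_\ell)$ for all feasible $O$, which is precisely the statement that $(\hat{O}_0,\hat{S}_0)$ attains $\phi(\mathcal{I}_\ell)$; the period-independence of $\hat{S}_0$ from (i) makes $(\hat{O}_0,\hat{S}_0)$ a genuine seed location/allocation solution for $\phi(\mathcal{I}_\ell)$ rather than just a location set.

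The step I expect to be the main obstacle is the inference in the middle of the second paragraph: deducing, from $d_{e^*(\hat{O}_0)}(\mathcal{I}_0)\le d_{e^*(O)}(\mathcal{I}_0)$ alone, that $e^*(O)$ dominates $e^*(\hat{O}_0)$ over all of $\mathcal{B}(\mathcal{T})$. Ranking invariance only guarantees that the two arcs are \emph{comparable}; a priori it could happen that $e^*(\hat{O}_0)$ dominates $e^*(O)$ with equality holding only in period $\mathcal{I}_0$, in which case $(\hat{O}_0,\hat{S}_0)$ might fail to be optimal in a later period. To close this gap I would either invoke a mild non-degeneracy assumption on the travel-time functions (two arcs tied on $\mathcal{I}_0$ are tied on every $\mathcal{I}_\ell$ — automatic for the IGP-generated instances underlying Theorem \ref{delta_1}), or, with no extra hypothesis, read ``the optimal solution $\hat{O}_0$'' as the feasible $O$ whose radius-defining arc $e^*(O)$ is smallest in a fixed linear extension of the dominance preorder; such an $O$ is optimal for $\phi(\mathcal{I}_0)$ and, by the way it is chosen, its arc is dominated by $e^*(O')$ for every competitor $O'$, so the chain of inequalities above carries over unchanged to every period.
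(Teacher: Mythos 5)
Your proof is correct and follows essentially the same route as the paper's: the paper likewise first argues that ranking invariance fixes the radius-defining arc of any decision $(O,S)$ across all periods, and then asserts the equivalence $r(\hat{O}_0,\hat{S}_0,\mathcal{I}_\ell)\leq r(O,S,\mathcal{I}_\ell)\Leftrightarrow r(\hat{O}_0,\hat{S}_0,\mathcal{I}_{\ell'})\leq r(O,S,\mathcal{I}_{\ell'})$, which is exactly your comparability step. The tie degeneracy you flag (two critical arcs equal on $\mathcal{I}_0$ but not on some later $\mathcal{I}_\ell$) is a genuine subtlety that the paper's proof passes over in silence, so your patch --- a non-degeneracy assumption, or choosing $\hat{O}_0$ whose critical arc is minimal in a linear extension of the dominance preorder --- is a legitimate refinement of the published argument rather than a departure from it.
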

\begin{proof}
From the hypothesis on the ranking invariance over  $\mathcal{B}(\mathcal{T})$ and Proposition \ref{Prop_AG0}, it descends that the worst service time of a generic single-period decision $(O,S)$ is associated with the same arc for every pair of time periods $\mathcal{I}_\ell$ and $\mathcal{I}_{\ell'}$, that is:
$$\arg\max\limits_{(i,j)\in E}(d_{ij}(\mathcal{I}_\ell) ~|~ i \in O,  j \in S[i] )=$$$$=\arg\max\limits_{(i,j)\in E}(d_{ij}(\mathcal{I}_{\ell'})~ |~ i \in O,  j \in S[i]),$$
where $\ell\neq \ell'$ and $\ell,\ell'=0,\dots,M$.
This means that the arc ranking invariance implies a worst service time ranking invariance that is:
$$
r(\hat{O}_0,\hat{S}_0,\mathcal{I}_\ell)\leq r(O,S,\mathcal{I}_\ell)\Leftrightarrow r(\hat{O}_0,\hat{S}_0,\mathcal{I}_{\ell'})\leq r(O,S,\mathcal{I}_{\ell'}),
$$
where $\ell\neq \ell'$, $(\hat{O}_0,\hat{S}_0)\neq(O,S)$ and $\ell,\ell'=0,\dots,M$. Since a single-period $p$-center problem aims to determine a solution with the minimum worst service time, then the thesis is proved. 
\end{proof}
Under the same hypothesis of Proposition \ref{Prop_AG1}  it is possible to demonstrate also the optimality of $(\mathbf{\bar{O}}(\emptyset),\mathbf{\bar{S}}(\emptyset))$.
\begin{prop}\label{Prop_AG2}
Given a set of time instants $\mathcal{T}$, if the time-dependent graph $G$ is ranking invariant over $\mathcal{B}(\mathcal{T})$, then the solution $(\mathbf{\bar{O}}(\emptyset),\mathbf{\bar{S}}(\emptyset))$ is optimal for any
$\mathbf{\Phi}(\mathcal{T},K)$, with $k=0,\dots,M$.
\end{prop}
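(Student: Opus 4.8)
The plan is to show that the no-relocation solution $(\mathbf{\bar{O}}(\emptyset),\mathbf{\bar{S}}(\emptyset))$ returned by Phase~II simultaneously (a) is feasible for every $\mathbf{\Phi}(\mathcal{T},K)$, $0\le K\le M$, and (b) attains a value that is a valid lower bound for all of them, so that optimality for each $K$ follows at once. Two ingredients are already available in the excerpt. Since a feasible solution of $\mathbf{\Phi}(\mathcal{T},K)$ is feasible for $\mathbf{\Phi}(\mathcal{T},K')$ whenever $K\le K'$, the optimal values are nonincreasing in the relocation budget, hence $\mathbf{\Phi}(\mathcal{T},M)\le\mathbf{\Phi}(\mathcal{T},K)$ for every $K$; and by Remark~\ref{Remark_2}, $\mathbf{\Phi}(\mathcal{T},M)=\sum_{\ell=0}^{M}\phi(\mathcal{I}_\ell)$. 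Feasibility of $(\mathbf{\bar{O}}(\emptyset),\mathbf{\bar{S}}(\emptyset))$ for every $\mathbf{\Phi}(\mathcal{T},K)$ is immediate because it prescribes no relocation, i.e. $n(\mathbf{\bar{O}}(\emptyset),\mathbf{\bar{S}}(\emptyset))=0\le K$. So the whole statement reduces to the single identity $R(\mathbf{\bar{O}}(\emptyset),\mathbf{\bar{S}}(\emptyset))=\sum_{\ell=0}^{M}\phi(\mathcal{I}_\ell)$.

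To establish that identity I would argue in three steps. (i) By construction in Phase~II with $\mathcal{T}_R=\emptyset$ the seed of the candidate is $\bar{O}_0$, an optimal location set of the single-period $p$-center problem $\phi([0,T])$ built with arc weights $d_{ij}([0,T])$; I would first check that arc ranking invariance over $\mathcal{B}(\mathcal{T})$ carries over to the enlarged collection $\mathcal{B}(\mathcal{T})\cup\{[0,T]\}$, which holds because $d_{ij}([0,T])=\max_{\ell}d_{ij}(\mathcal{I}_\ell)$, so if $\tau_{ij}$ dominates $\tau_{rs}$ over $\mathcal{B}(\mathcal{T})$ then $d_{rs}([0,T])\le d_{ij}([0,T])$ as well, and the total order on arcs is the same on $[0,T]$ as on each $\mathcal{I}_\ell$. (ii) With this in hand, the worst-service-time ranking invariance argument of Proposition~\ref{Prop_AG1} applies verbatim to show that $\bar{O}_0$ is also an optimal $p$-center location for every $\phi(\mathcal{I}_\ell)$, i.e. $r(\bar{O}_0,\bar{S}_0,\mathcal{I}_\ell)=\phi(\mathcal{I}_\ell)$ for $\ell=0,\dots,M$. (iii) By Proposition~\ref{Prop_AG0}, ranking invariance over $\mathcal{B}(\mathcal{T})$ forces the allocation attached to the fixed seed $\bar{O}_0$ to be the same in every period, so $(\bar{O}_\ell,\bar{S}_\ell)=(\bar{O}_0,\bar{S}_0)$ for all $\ell$, and therefore $R(\mathbf{\bar{O}}(\emptyset),\mathbf{\bar{S}}(\emptyset))=\sum_{\ell=0}^{M}r(\bar{O}_0,\bar{S}_0,\mathcal{I}_\ell)=\sum_{\ell=0}^{M}\phi(\mathcal{I}_\ell)$.

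Putting everything together: $(\mathbf{\bar{O}}(\emptyset),\mathbf{\bar{S}}(\emptyset))$ is feasible for $\mathbf{\Phi}(\mathcal{T},K)$ and attains objective value $\sum_{\ell=0}^{M}\phi(\mathcal{I}_\ell)=\mathbf{\Phi}(\mathcal{T},M)\le\mathbf{\Phi}(\mathcal{T},K)$, hence it is optimal for $\mathbf{\Phi}(\mathcal{T},K)$ for every $K=0,\dots,M$.

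The step I expect to be the main obstacle is (i)--(ii): the heuristic actually computes the $p$-center solution over the \emph{whole} horizon $[0,T]$, whereas Propositions~\ref{Prop_AG0}--\ref{Prop_AG1} and Corollary~\ref{delta_2} are phrased in terms of the period-wise problems $\phi(\mathcal{I}_\ell)$, so the crux is to confirm that arc ranking invariance over $\mathcal{B}(\mathcal{T})$ is strong enough to make these two notions of ``optimal $p$-center location'' coincide, in objective value and in fact as location sets up to a consistent tie-break. Once this is verified, the remainder is routine bookkeeping with Remark~\ref{Remark_2} and the monotonicity of $\mathbf{\Phi}(\mathcal{T},\cdot)$.
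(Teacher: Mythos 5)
Your proof is correct, and while it shares the paper's overall skeleton -- establish feasibility of the no-relocation solution for every $K$, invoke the relaxation chain so that $\mathbf{\Phi}(\mathcal{T},M)=\sum_{\ell}\phi(\mathcal{I}_\ell)$ is a lower bound, and then show the Phase~II solution attains it -- you handle the key step differently. The paper introduces the intermediate solution $(\hat{O}_0,\hat{S}_0)$ (optimal for $\phi(\mathcal{I}_0)$), shows via Proposition~\ref{Prop_AG1} that it is optimal for every period, and then argues by contradiction that $(\bar{O}_0,\bar{S}_0)=(\hat{O}_0,\hat{S}_0)$, using the containment $\mathcal{I}_{\ell'}\subseteq[0,T]$ to derive $r(\bar{O}_0,\bar{S}_0,\mathcal{I}_{\ell'})<r(\hat{O}_0,\hat{S}_0,\mathcal{I}_{\ell'})$. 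You instead observe that $d_{ij}([0,T])=\max_\ell d_{ij}(\mathcal{I}_\ell)$, so arc ranking invariance over $\mathcal{B}(\mathcal{T})$ extends to the enlarged family including $[0,T]$, and then apply the worst-service-time ranking-invariance argument of Proposition~\ref{Prop_AG1} directly to $\bar{O}_0$. Your route is arguably cleaner: it works purely with objective values and avoids the paper's somewhat delicate contradiction step, which infers from $(\bar{O}_0,\bar{S}_0)\neq(\hat{O}_0,\hat{S}_0)$ the strict inequality $r(\bar{O}_0,\bar{S}_0,[0,T])<r(\hat{O}_0,\hat{S}_0,[0,T])$ -- an inference that fails when $\phi([0,T])$ has multiple optima, even though only equality of values (not of location sets) is actually needed for the conclusion. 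The price you pay is the extra verification in your step~(i), but that verification is a one-line consequence of the definition of $d_{ij}(\cdot)$ as a maximum, so the trade is favorable.
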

\begin{proof}
 \textcolor{black}{From Remark \ref{Remark_1} it follows that the thesis is proved if we demonstrate that $(\mathbf{\bar{O}}(\emptyset),\mathbf{\bar{S}}(\emptyset))$ is optimal for $\mathbf{\Phi}(\mathcal{T},M)$. 
 Let us denote with $(\mathbf{\hat{O}}(\emptyset),\mathbf{\hat{S}}(\emptyset))$ a feasible solution of $\mathbf{\Phi}(\mathcal{T},0)$ having as (unique) \textit{seed} location $\hat{O}_0$.
 From Remark \ref{Remark_2} and Proposition \ref{Prop_AG1}, it results that $(\mathbf{\hat{O}}(\emptyset),\mathbf{\hat{S}}(\emptyset))$ is optimal for  $\mathbf{\Phi}(\mathcal{T},M)$. Therefore,}
 the thesis is proved if we demonstrate that:
 \begin{equation}\label{Prop_AG2:0}
(\bar{O}_0,\bar{S}_0)=(\hat{O}_0,\hat{S}_0).
 \end{equation}
We prove (\ref{Prop_AG2:0}) by contradiction. Therefore, by assuming that $(\bar{O}_0,\bar{S}_0)\neq(\hat{O}_0,\hat{S}_0)$, we have that  $(\hat{O}_0,\hat{S}_0)$ is not optimal for $\phi([0,T])$, that is:
  \begin{equation}\label{Prop_AG2:1}
r(\bar{O}_0,\bar{S}_0,[0,T])<r(\hat{O}_0,\hat{S}_0,[0,T]).
 \end{equation}
 Let $\ell'$ denote the time interval in $\mathcal{B}(\mathcal{T})$ associated to the worst service time of $(\hat{O}_0,\hat{S}_0)$, that is:
 $$r(\hat{O}_0,\hat{S}_0,[0,T])=r(\hat{O}_0,\hat{S}_0,\mathcal{I}_{\ell'})$$
 Since $\mathcal{I}_{\ell'}\subseteq [0,T]$, then we have that:
  \begin{equation}\label{Prop_AG2:2}
r(\bar{O}_0,\bar{S}_0,\mathcal{I}_{\ell'})\leq r(\bar{O}_0,\bar{S}_0,[0,T]).
 \end{equation}
From (\ref{Prop_AG2:1}) and (\ref{Prop_AG2:2}), it follows that:
$$
r(\bar{O}_0,\bar{S}_0,\mathcal{I}_{\ell'})<r(\hat{O}_0,\hat{S}_0,\mathcal{I}_{\ell'}),
$$
which implies that $(\hat{O}_0,\hat{S}_0)$ is not optimal for $\phi(\mathcal{I}_{\ell'})$. This contradicts the thesis of Proposition  \ref{Prop_AG1} and, therefore, the corresponding hypothesis, which are the same we are making.
\end{proof}

\noindent Given a subset $\mathcal{T}_R\neq\emptyset$,  from Propositions  \ref{Prop_AG1} and  \ref{Prop_AG2} it follows that, if the arc ranking invariance property holds over $\mathcal{B}(\mathcal{T}^k_R)$, then the subsequence $(\mathbf{\bar{O}}(\mathcal{T}_R),\mathbf{\bar{S}}(\mathcal{T}_R))_k$ is optimal for $\mathbf{\Phi}(\mathcal{T}^k_R,0)$, with $k=0,\dots, K$. From (\ref{phi_k_t}) it descends that we can generalize the sufficient optimality condition to the complete solution $(\mathbf{\bar{O}}(\mathcal{T}_R), \mathbf{\bar{S}}(\mathcal{T}_R))$ as follows.
\begin{prop}\label{Special_case}
Given a subset of $K$ time instants $\mathcal{T}_R\subset\mathcal{T}$, if  the arc ranking invariance property holds over each $\mathcal{B}(\mathcal{T}^k_R)$, with $k=0,\dots, K$, then 
$(\mathbf{\bar{O}}(\mathcal{T}_R), \mathbf{\bar{S}}(\mathcal{T}_R))$ is optimal for all location problems 
$\mathbf{\Phi}(\mathcal{T},K),\dots,\mathbf{\Phi}(\mathcal{T},M)$.
\end{prop}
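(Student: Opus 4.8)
The plan is to show that the heuristic solution $(\mathbf{\bar{O}}(\mathcal{T}_R),\mathbf{\bar{S}}(\mathcal{T}_R))$ attains exactly the value $\sum_{\ell=0}^{M}\phi(\mathcal{I}_\ell)$, which by Remark \ref{Remark_2} is the optimum of the loosest relaxation $\mathbf{\Phi}(\mathcal{T},M)$. Since this solution is feasible for $\mathbf{\Phi}(\mathcal{T},K)$ — it is assembled from the $K$ instants of $\mathcal{T}_R$, hence prescribes at most $K$ relocations — it is also feasible for every $\mathbf{\Phi}(\mathcal{T},K')$ with $K\le K'\le M$; and because $\mathbf{\Phi}(\mathcal{T},M)$ lower-bounds each of these (it relaxes them all, as noted just before Remark \ref{Remark_1}), attaining that bound forces optimality for all of $\mathbf{\Phi}(\mathcal{T},K),\dots,\mathbf{\Phi}(\mathcal{T},M)$ simultaneously. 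So the whole argument reduces to evaluating $R(\mathbf{\bar{O}}(\mathcal{T}_R),\mathbf{\bar{S}}(\mathcal{T}_R))$.

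To do this I would work macro-period by macro-period. Fix $k$. By hypothesis the arc ranking invariance property holds over $\mathcal{B}(\mathcal{T}^k_R)$, and — exactly as explained after (\ref{phi_k_t}) — $\mathbf{\Phi}(\mathcal{T}^k_R,0)$ is an instance of the same problem with $\mathcal{T}^k_R$ and $\mathcal{I}^k_R$ in the roles of $\mathcal{T}$ and $[0,T]$. Hence Proposition \ref{Prop_AG1} applies to this sub-instance: the single-period $p$-center optimum $\bar{O}_k$ of $\phi(\mathcal{I}^k_R)$, which Phase II installs for the entire macro-period, is simultaneously an optimal location for every single-period problem $\phi(\mathcal{I}_\ell)$ with $\mathcal{I}_\ell\in\mathcal{B}(\mathcal{T}^k_R)$. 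Consequently the maximum service time produced by $(\mathbf{\bar{O}}(\mathcal{T}_R),\mathbf{\bar{S}}(\mathcal{T}_R))$ on each such period $\mathcal{I}_\ell$ equals $\phi(\mathcal{I}_\ell)$. (Equivalently, one may invoke the sentence preceding the statement: $(\mathbf{\bar{O}}(\mathcal{T}_R),\mathbf{\bar{S}}(\mathcal{T}_R))_k$ is optimal for $\mathbf{\Phi}(\mathcal{T}^k_R,0)$, and then use Proposition \ref{Prop_AG2} and Remark \ref{Remark_2} on the sub-instance to rewrite $\mathbf{\Phi}(\mathcal{T}^k_R,0)$ as $\sum_{\mathcal{I}_\ell\in\mathcal{B}(\mathcal{T}^k_R)}\phi(\mathcal{I}_\ell)$.)

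Summing over all periods and using that the families $\mathcal{B}(\mathcal{T}^0_R),\dots,\mathcal{B}(\mathcal{T}^K_R)$ partition $\mathcal{B}(\mathcal{T})$, I obtain
\[
R(\mathbf{\bar{O}}(\mathcal{T}_R),\mathbf{\bar{S}}(\mathcal{T}_R))=\sum_{k=0}^{K}\ \sum_{\mathcal{I}_\ell\in\mathcal{B}(\mathcal{T}^k_R)}\phi(\mathcal{I}_\ell)=\sum_{\ell=0}^{M}\phi(\mathcal{I}_\ell)=\mathbf{\Phi}(\mathcal{T},M),
\]
the last equality by Remark \ref{Remark_2}. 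Combining this with the feasibility/relaxation chain from the first paragraph closes the proof; alternatively, once optimality for $\mathbf{\Phi}(\mathcal{T},M)$ is in hand, Remark \ref{Remark_1} propagates it down to every $\mathbf{\Phi}(\mathcal{T},K'')$ with $K''$ at least the number of relocations actually used (which is $\le K$), hence in particular to $\mathbf{\Phi}(\mathcal{T},K),\dots,\mathbf{\Phi}(\mathcal{T},M-1)$.

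\textbf{The main obstacle} I anticipate is not algebraic but a matter of rigor: one must verify that Propositions \ref{Prop_AG1}--\ref{Prop_AG2} and Remark \ref{Remark_2}, stated for the ambient data $(\mathcal{T},[0,T])$, carry over verbatim to each macro-period sub-instance $(\mathcal{T}^k_R,\mathcal{I}^k_R)$ — i.e., their proofs use only the generic structure of M$p$CP-TD over whatever set of relocation instants and horizon is fixed, together with arc ranking invariance over the corresponding period partition. One also has to keep the indexing honest so that $\{\mathcal{B}(\mathcal{T}^k_R)\}_{k=0}^{K}$ is genuinely a partition of $\mathcal{B}(\mathcal{T})$ with no period omitted or double-counted, which is precisely where $\sum_{k=0}^{K}M_k=M$ enters. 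Everything else follows directly from the decomposition (\ref{phi_k_t}) and Phase II's definition of $\bar{O}_k$.
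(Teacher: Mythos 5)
Your proposal is correct and follows essentially the same route as the paper's proof: generalize Propositions \ref{Prop_AG1}--\ref{Prop_AG2} to each macro-period sub-instance so that $\bar{O}_{\sigma(k)}$ is optimal for every $\phi(\mathcal{I}_\ell)$ with $\mathcal{I}_\ell\in\mathcal{B}(\mathcal{T}^k_R)$, conclude via Remark \ref{Remark_2} that the assembled solution attains $\mathbf{\Phi}(\mathcal{T},M)$, and propagate optimality down to $\mathbf{\Phi}(\mathcal{T},K)$ via Remark \ref{Remark_1} (your explicit feasibility/lower-bound chain is just that remark spelled out). Your added care about the partition $\{\mathcal{B}(\mathcal{T}^k_R)\}_{k}$ and the applicability of the propositions to sub-instances makes the argument more rigorous than the paper's terse version, but it is the same proof.
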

\begin{proof}
Propositions (\ref{Prop_AG1}) and (\ref{Prop_AG2}) can be generalized by asserting that $(\hat{O}_{\sigma(k)},$ $\hat{S}_{\sigma(k)})$ and$(\bar{O}_{\sigma(k)},\bar{S}_{\sigma(k)})$ are both optimal for each  single-period $p$-center problem $\phi(\mathcal{I}_\ell)$, where $\mathcal{I}_\ell\in\mathcal{B}(\mathcal{T}^k_R)$ and $k=0,\dots,K$. Therefore, according to Remark \ref{Remark_2}, it descends that $(\mathbf{\bar{O}}(\mathcal{T}_R),\mathbf{\bar{S}}(\mathcal{T}_R))$ is an optimal solution of $\mathbf{\Phi}(\mathcal{T},M)$. From Remark \ref{Remark_1}, the thesis is proved.
\end{proof}

\noindent From Theorem \ref{delta_1} and  Proposition \ref{Special_case}, it descends the following Corollary. 
\begin{cor}\label{Special_casexx}
Given a  subset of $K$ time instants $\mathcal{T}_R\subset\mathcal{T}$, if  $\Delta(\mathcal{T}_R)=1$, then 
$(\mathbf{\bar{O}}(\mathcal{T}_R), \mathbf{\bar{S}}(\mathcal{T}_R))$ is optimal for 
$\mathbf{\Phi}(\mathcal{T},K)$.
\end{cor}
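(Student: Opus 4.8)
The plan is to chain together Corollary~\ref{delta_2} and Proposition~\ref{Special_case}, since the statement of Corollary~\ref{Special_casexx} is essentially their composition. The hypothesis $\Delta(\mathcal{T}_R)=1$ is precisely the antecedent of Corollary~\ref{delta_2}, and the conclusion of Proposition~\ref{Special_case} contains optimality for $\mathbf{\Phi}(\mathcal{T},K)$ among the problems $\mathbf{\Phi}(\mathcal{T},K),\dots,\mathbf{\Phi}(\mathcal{T},M)$. So the proof is a two-line implication, and the only care needed is to confirm the hypotheses match up exactly.

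\textbf{Step 1.} Invoke Corollary~\ref{delta_2} with the given subset $\mathcal{T}_R$. Since by assumption $\Delta(\mathcal{T}_R)=1$, that corollary yields that the arc ranking invariance property holds over each $\mathcal{B}(\mathcal{T}^k_R)$, for $k=0,\dots,K$.

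\textbf{Step 2.} Feed this into Proposition~\ref{Special_case}, whose hypothesis is exactly ``the arc ranking invariance property holds over each $\mathcal{B}(\mathcal{T}^k_R)$, with $k=0,\dots,K$.'' The conclusion is that $(\mathbf{\bar{O}}(\mathcal{T}_R),\mathbf{\bar{S}}(\mathcal{T}_R))$ is optimal for all of $\mathbf{\Phi}(\mathcal{T},K),\dots,\mathbf{\Phi}(\mathcal{T},M)$. In particular it is optimal for $\mathbf{\Phi}(\mathcal{T},K)$, which is the thesis.

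\textbf{The main (and only) obstacle} is bookkeeping rather than mathematics: one should make sure that $\mathcal{T}_R$ having exactly $K$ time instants is consistent with the construction of $(\mathbf{\bar{O}}(\mathcal{T}_R),\mathbf{\bar{S}}(\mathcal{T}_R))$ described in Phase~II (which builds $K+1$ seed location decisions, one per macro-period induced by $\mathcal{T}_R$), and that the index ranges in Corollary~\ref{delta_2} and Proposition~\ref{Special_case} are the same $k=0,\dots,K$. Both are immediate from the definitions in Section~\ref{formulation}, so no further work is required. Hence the proof reduces to: \emph{by Corollary~\ref{delta_2}, $\Delta(\mathcal{T}_R)=1$ implies arc ranking invariance over each $\mathcal{B}(\mathcal{T}^k_R)$; by Proposition~\ref{Special_case}, this implies $(\mathbf{\bar{O}}(\mathcal{T}_R),\mathbf{\bar{S}}(\mathcal{T}_R))$ is optimal for $\mathbf{\Phi}(\mathcal{T},K)$.}
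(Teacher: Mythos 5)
Your proof is correct and matches the paper's own derivation, which likewise obtains the corollary by combining the $\Delta(\mathcal{T}_R)=1$ sufficient condition for arc ranking invariance over each $\mathcal{B}(\mathcal{T}^k_R)$ (Theorem~\ref{delta_1}, packaged as Corollary~\ref{delta_2}) with Proposition~\ref{Special_case}. Your explicit use of Corollary~\ref{delta_2} as the intermediate step is just a slightly more granular citation of the same two-line argument.
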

\noindent In the following, we propose an optimization model that aims to determine a subset $\mathcal{T}_R$ such that $\Delta(\mathcal{T}_R)\simeq1.$
 
\subsection{Selection of relocation time instants}
Given a subset $\mathcal{T}_R\subseteq\mathcal{T}$, evaluating the corresponding $\Delta(\mathcal{T}_R)$ requires determining each  $\Delta_k(\mathcal{T}_R)$ according to the following two steps, with $k=0,\dots,K$. First, we should run the procedure proposed in \cite{GRRGHN} for each arc $(i,j)\in E$ taking as  reference time interval the macro period \textcolor{black}{$\mathcal{I}^k_R$ with $k=0,\dots,K$}. Then,  the speed decomposition (\ref{fac}) and the corresponding $\Delta_k(\mathcal{T}_R)$ should be determined.

We approximate such computing procedure according to the following two-steps procedure.
\\\\
\underline{\emph{STEP 1.}} We run the procedure proposed in \cite{GRRGHN} for each arc $(i,j)\in E$ taking as  reference time period the overall planning horizon $[0,T]$. To ease the discussion, we rewrite the corresponding speed decomposition (\ref{fac}) with overlined symbols, that is:
$$\bar{v}_{ij\ell}=\bar{b}_\ell\bar{\delta}_{ij\ell}\bar{u}_ij,$$
with $(i,j)\in E$ and $\ell=0,\dots,M$.
\\\\
\underline{\emph{STEP 2.}} We evaluate the given subset $\mathcal{T_R}=\{t_{\sigma(1)},\dots,t_{\sigma(K)}\}\subseteq\mathcal{T}$ according to the parameter $z(\mathcal{T}_R)\leq1$, which is  a proxy value of  $\Delta(\mathcal{T}_R)$ defined as follows:
$$
z(\mathcal{T}_R)=\min\limits_{k=0,\dots,K}c_{\sigma(k)\sigma(k+1)},
$$
with
\begin{equation}\label{c_coeff}
c_{\sigma(k)\sigma(k+1)}=\min(\bar{\delta}_{ij\ell}| \mathcal{I}_\ell\subseteq[t_{\sigma(k)},t_{\sigma(k+1)}]\wedge (i,j)\in E),
\end{equation}
where we recall that the interval $[t_{\sigma(k)},t_{\sigma(k+1)}]$ is the $k$-th macro period $\mathcal{I}^k_R$, with $k=0,\dots,K$. 
\\\\
The main issue of such approach is that for any subset of time instants $\mathcal{T}_R$, it results that:
$$z(\mathcal{T}_R)=\Delta(\emptyset).$$ 
Nevertheless, we observe that the optimality condition on $\Delta(\mathcal{T}_R)$ can be reformulated as follows:
$$\Delta(\mathcal{T}_R)=1\Longleftrightarrow\sum\limits_{k=0}^{K}\Delta(\mathcal{T}^k_R)=K+1.$$
Therefore, during Phase I, we select the subset $\mathcal{T}_R$ having the maximum value of 
\begin{equation}\label{aa}
\sum\limits_{k=0}^{K}z(\mathcal{T}^k_R).
\end{equation}
We model such selection decision as the  \emph{Resource Constrained Maximum Path Problem} (\ref{1c1})-(\ref{1c8}).

Let $\overline{G}=(V,A)$ denote an acyclic directed graph, where  $V=\{0,1,\dots,M\}$ is the set of nodes and  $A$ denote the set of $\binom{M}{2}$ pairs of nodes  $(h,\ell)$, 
$$A=\{(h,\ell)~|~h,\ell\in V\wedge h<\ell\}.$$ 
In the proposed model,  a subset $\mathcal{T}_R$ is modeled as the simple path $\{0,\sigma(1),$ $\dots,\sigma(K),M\}$, i.e., a path starting at node $0$, ending at node $M$ and consisting of $K+1$ arcs. 
In particular, for each arc $(h,\ell)\in A$ we compute the \textit{gain} coefficient $c_{h\ell}$ according to (\ref{c_coeff}) and define the binary decision variable $y_{h\ell}$, taking value 1 if  the arc $(h,\ell)$ belongs to the path. 

\begin{align}
& \mathrm{Maximize}  & &  \sum\limits_{(h,\ell)\in A} c_{h\ell}y_{h\ell} & \label{1c1}\\
& \mathrm{s.t.}  & &   & \nonumber \\
& & & \sum\limits_{(h ,\ell)\in A} y_{h\ell}- \sum\limits_{(\ell,h)\in A}y_{\ell h}=0 &\ell \in \{1,\dots,M\} \label{2c6}\\
& & & \sum\limits_{(0,\ell)\in A} y_{0\ell} = 1 & \label{1c4}\\
& & & \sum\limits_{(h,M)\in A} y_{h M} = 1 & \label{1c5}\\
& & & \sum\limits_{(h,\ell)\in A} y_{h\ell} = K+1 & \label{1c7}\\
& & & y_{h\ell}\in\{0,1\}& (h,\ell)\in A & \label{1c8}
\end{align}
The objective function (\ref{1c1}) models (\ref{aa}) as the cost of a path on $\overline{G}$. Constraints (\ref{2c6})--(\ref{1c5}) are flow conservation constraints, while constraint (\ref{1c7}) is the resource constraint stating that a feasible path consists of $K+1$ arcs. Constraints (\ref{1c8}) provide the binary condition on decision variables.

\section{Computational results}\label{results}
The proposed heuristic algorithm was coded in Java and run on a MacBook Pro with an  Intel Core 2 Duo processor clocked at 2.33 GHz and 4 GB of memory. The $K+1$ time-invariant $p$-center problems  $\phi(\mathcal{I}^k_R)$ of Phase II were modeled as in  \cite{doi:10.1002/9781118032343.ch5}. The optimization models of Phases I and II were solved by Cplex 12.6.0 \textcolor{black}{with a time limit of 900 seconds}.
Our approach was tested on a set of instances derived from the road network of the urban area of Paris (France) covering $2531.4$ $\mathrm{km}^2$. Spatial data were extracted from OpenStreetMap (www.openstreetmap.org). The road-network graph consisted of  $307,998$ arcs: $151,353$ streets were characterized by time-dependent travel times, whilst the remaining  $156,645$ streets had constant travel times. Using such realistic traffic data, we generated 240 instances. Following the literature, in all the test sets each vertex was both a customer and a candidate location for a facility (i.e., $C\equiv F$). 
For each possible value of $|C|$ in the set $\{50,100\}$, we  generated 10 time-dependent graphs. As far as the multi-period setting was concerned,  the  set $\mathcal{T}$ consisted of 120 time instants. For each time-dependent graph, we generated 12 M$p$CP-TD instances, one for each possible pair $(p,K)$, with $p\in\{5,10\}$ and $K\in\{0,2,4,6,8,10\}$.  

In Tables \ref{Tab1} and \ref{Tab2}  we report our computational results for $|C|=50$ and $|C|=100$, respectively. In both tables, the first two columns are self-explanatory. The remaining column headings are as follows:
\begin{itemize}
\item $GAP$: the percentage optimality gap;
\item PHASE I: time (in seconds) spent to determine the subset $\mathcal{T}_R$;
\item PHASE II: time (in seconds) spent to determine the $K+1$ \textit{seed} decisions, one for each macro-period $\mathcal{I}^k_R$, with $k=0,\dots,K$;
\item TIME: overall time (in seconds) spent by the proposed algorithm to determine the heuristic solution  $(\bar{\mathbf{O}}(\mathcal{T}_R),\bar{\mathbf{S}}(\mathcal{T}_R))$.
\item \textcolor{black} {GAIN:  improvement of the objective function value with respect to $\mathbf{\Phi}(\mathcal{T},0)$. The value has been normalized with respect to the maximum value, i.e. :
$$\frac{\mathbf{\Phi}(\mathcal{T},0)- R(\bar{\mathbf{O}}(\mathcal{T}_R),\bar{\mathbf{S}}(\mathcal{T}_R))}{\mathbf{\Phi}(\mathcal{T},0)-\mathbf{\Phi}(\mathcal{T},M)}.$$}
\end{itemize}
\textcolor{black} {The columns reporting the optimality gap, the execution times and  gains are averaged across all instances.}

The average gap value was $3.96\%$ and $5.44\%$ for $|C|=50$ and $|C|=100$, respectively. 

If $K=M$, facility relocations are allowed at each variation of the worst case service time. As stated in the previous sections, in this case  the optimal solution can be determined by solving $M$ independent \textit{single-period} $p$-center problems. Moreover, $\mathbf{\Phi}(\mathcal{T},M)$ represents a lower bound for the M$p$CP-TD with $0\leq K<M$. For these reasons, the optimality gaps are computed as:
$$\frac{R(\bar{\mathbf{O}}(\mathcal{T}_R),\bar{\mathbf{S}}(\mathcal{T}_R))-\mathbf{\Phi}(\mathcal{T},M)}{\mathbf{\Phi}(\mathcal{T},M)}.$$

\begin{table}[!t]
\centering
\caption{Computational results of the instances with $|C|=50$}
\label{Tab1}
\begin{small}
\begin{tabular}{c|c|c|c|c|c|c}
\hline
$p$                   & $K$   & $GAP$ & PHASE I& PHASE II& TIME & \textcolor{black}{GAIN}\\ \hline
\multirow{7}{*}{5} &0 & 8.57& 0.00  &0.92& 0.92& \textcolor{black}{0.0  }  \\ 
                     & 2   & 4.70   &14.59  &  1.85& 16.44 & \textcolor{black}{0.38}\\ 
                     & 4   & 4.18   & 35.60&4.62  & 40.22 &\textcolor{black}{ 0.45}\\ 
                     & 6   & 3.41   &112.33  & 6.48& 118.81& \textcolor{black}{0.55}\\ 
                     & 8  &3.30   & 170.91  & 8.33 &179.24& \textcolor{black}{0.58}\\ 
                     & 10  & 2.65  & 237.79& 10.18 & 247.97&\textcolor{black}{0.65}\\ 
                     & 120 & 0.00   &0.00 & 111.20& 111.20 &\textcolor{black}{ 1.0}  \\ \hline
\multirow{7}{*}{10} &0& 8.64 &0.00  &0.74& 0.74&  \textcolor{black}{0.0 } \\ 
                     & 2   & 5.72   &13.44& 2.23& 15.67& \textcolor{black}{ 0.27}  \\ 
                     & 4   & 4.36   &35.66&3.71& 39.32 & \textcolor{black}{0.42 } \\ 
                     & 6   & 3.25  &112.70&5.20& 117.90&\textcolor{black}{ 0.60 }  \\ 
                     & 8  & 3.35  & 170.60&6.70& 177.26 &\textcolor{black}{ 0.60}\\ 
                     & 10  & 3.30  &238.49&8.17& 246.66& \textcolor{black}{0.60}   \\ 
                     & 120 & 0.00  &0.00 & 89.11& 89.11  & \textcolor{black}{1.0} \\ \hline
\multicolumn{2}{l|}{Average} & 3.96 & 81.58 & 18.53 & 100.11&\textcolor{black}{0.49} \\ \hline
\end{tabular}
\end{small}
\end{table}

\noindent If the number of relocations $K=0$, the M$p$CP-TD is solved as a \textit{single-period} $p$-center problem. As demonstrated in the previous sections, such an approach is optimal if arc ranking invariance holds. In our instances, such an optimality condition was never satisfied, resulting in an average gap equal to $8.60\%$ and $11.30\%$ for Tables \ref{Tab1} and \ref{Tab2}, respectively.
 
For $K=6$,  the average gap was consistently lower than $3.50\%$ in Table \ref{Tab1} and $5.50\%$ in Table \ref{Tab2}. For $K=8$ and $K=10$, slight improvements (lower than $1\%$ in both tables) were observed.

\begin{table}[!t]
\centering
\caption{Computational results of the instances with $|C|=100$}
\label{Tab2}
\begin{small}
\begin{tabular}{c|c|c|c|c|c|c}
\hline
$p$                   & $K$   & $GAP $ & PHASE I& PHASE II& TIME& \textcolor{black}{GAIN} \\ \hline
\multirow{7}{*}{5}  &0& 10.39 & 0.00 & 7.75&7.75 & \textcolor{black}{0.00}  \\ 
                     & 2   & 7.62   &81.31&23.25& 104.56  & \textcolor{black}{0.19}  \\ 
                     & 4   & 5.51   &104.76&38.75&143.51 &  \textcolor{black}{0.41} \\ 
                     & 6   & 4.50   &145.47&54.26& 199.73 & \textcolor{black}{0.52}  \\ 
                     & 8   & 4.19  &297.61&69.76& 367.37  &  \textcolor{black}{0.54}\\ 
                     & 10  & 3.80  &370.17&85.26& 455.43 &  \textcolor{black}{0.57 }\\ 
                     & 120 & 0.00  &0.00 & 930.10& 930.10 & \textcolor{black}{1.00 }    \\ \hline
\multirow{6}{*}{10} &0& 12.22&  0.00 &7.23& 7.23 &\textcolor{black}{0.00  }\\ 
                     & 2   & 8.55   &82.06&21.71&103.77  &  \textcolor{black}{0.29}\\ 
                     & 4   & 5.54   &105.52&36.18&141.71 & \textcolor{black}{0.53 } \\ 
                     & 6   & 5.10   &147.78&50.66&198.43  & \textcolor{black}{0.57} \\ 
                     & 8   & 4.52  &293.18&65.13 & 358.05   &\textcolor{black}{ 0.62}\\ 
                     & 10  & 4.25   &371.35&79.60& 450.95   & \textcolor{black}{0.64}\\ 
                     & 120 & 0.00  &0.00 & 868.39& 868.39   &\textcolor{black}{1.00}\\ \hline
\multicolumn{2}{l|}{Average} & 5.44 & 142.79 & 167.00 & 309.79&\textcolor{black}{0.51} \\ \hline
\end{tabular}
\end{small}
\end{table}

\textcolor{black}{The results show that for $K=6$ relocations in the planning horizon (which is reasonable in most applications) our approach allowed to achieve more than 50\% of the GAIN obtainable in an ``ideal'' situation in which a relocation is made possible at each period. A larger number of relocations provides only a negligible improvement.}

As far as the execution times are concerned, we observe that the first phase was always skipped for both $K=0$ and $K=120$. Indeed, in these cases the optimization  model (\ref{1c1})-(\ref{1c8}) had a unique solution, that is $\mathcal{T}_R=\mathcal{T}$ and $\mathcal{T}_R=\emptyset$, respectively. In all the other cases the heuristic solution $(\bar{\mathbf{O}}(\mathcal{T}_R),\bar{\mathbf{S}}(\mathcal{T}_R))$ was determined in $100.11$ and $309.79$ seconds on average for Tables \ref{Tab1} and \ref{Tab2}. For $|C|=50$, the majority of the execution time was spent during Phase I ($81.58$ seconds on the average). On the other hand, for $|C|=100$ the computing times for the two phases were comparable ($142.79$ versus $167.00$ seconds on the average).

\section{Conclusions}\label{Conclusions}
\textcolor{black}{This paper studied a time-dependent $p$-center problem in which facilities are mobile units that can be relocated multiple times. We  proposed a multi-period formulation that accounts for the influence of traffic variability. We assumed that the facilities can be relocated several times. In particular, we supposed that the service time from facilities to demand site are continuous piecewise linear function satisfying the FIFO property. The time variability of the worst case service time of each arc is modeled as constant stepwise function, whose \textit{pieces} are the maximum service time between two consecutive possible relocation time instants. We proposed a heuristic algorithm consisting of two phases. During the first phase an ILP problem is solved in order to determine relocations times. During the second phase, a \textit{classical} $p$-center problems is solved in order to determine the locations of the facilities between two consecutive relocation times.  We finally devised sufficient optimality conditions for an optimal solution. Computational experiments have been carried out on instances based on the Paris (France) road graph. The results  indicated that the algorithm can consistently generate  solutions with a limited number of relocations that improve on the corresponding time-invariant optimal solutions. Future work will focus on adapting the ideas introduced in this paper to other variant of the $p$-center problem, such as, among others, the \textit{capacitated} $p$-CP \citep{doi:10.1287/ijoc.2019.0889}, the  \textit{weighted} \textit{p}CP \citep{ChenHandler1993}, the \textit{conditional p}CP \citep{berman2008new}, the $\alpha$-\textit{neighbor p}CP \citep{CHEN201336}. Another future research direction we propose is to consider the extension of our results to the more complex multi-period location-routing problem \citep{ALBAREDASAMBOLA2012248}.}

\section*{References}

\bibliographystyle{plainnat}
\bibliography{biblio}

\begin{thebibliography}{34}
\providecommand{\natexlab}[1]{#1}
\providecommand{\url}[1]{\texttt{#1}}
\expandafter\ifx\csname urlstyle\endcsname\relax
  \providecommand{\doi}[1]{doi: #1}\else
  \providecommand{\doi}{doi: \begingroup \urlstyle{rm}\Url}\fi

\bibitem[Albareda-Sambola et~al.(2012)Albareda-Sambola, Fernández, and
  Nickel]{ALBAREDASAMBOLA2012248}
Maria Albareda-Sambola, Elena Fernández, and Stefan Nickel.
\newblock Multiperiod location-routing with decoupled time scales.
\newblock \emph{European Journal of Operational Research}, 217\penalty0
  (2):\penalty0 248 -- 258, 2012.
\newblock ISSN 0377-2217.
\newblock \doi{https://doi.org/10.1016/j.ejor.2011.09.022}.
\newblock URL
  \url{http://www.sciencedirect.com/science/article/pii/S0377221711008502}.

\bibitem[B\'{e}langer et~al.(2019)B\'{e}langer, Ruiz, and
  Soriano]{BELANGER20191}
V.~B\'{e}langer, A.~Ruiz, and P.~Soriano.
\newblock Recent optimization models and trends in location, relocation, and
  dispatching of emergency medical vehicles.
\newblock \emph{\textit{European Journal of Operational Research}},
  272\penalty0 (1):\penalty0 1--23, 2019.

\bibitem[Berman and Drezner(2008)]{berman2008new}
Oded Berman and Zvi Drezner.
\newblock A new formulation for the conditional $p$-median and $p$-center
  problems.
\newblock \emph{\textit{Operations Research Letters}}, 36\penalty0
  (4):\penalty0 481--483, 2008.

\bibitem[Berman and Odoni(1982)]{berman1982locating}
Oded Berman and Amedeo~R Odoni.
\newblock Locating mobile servers on a network with markovian properties.
\newblock \emph{Networks}, 12\penalty0 (1):\penalty0 73--86, 1982.

\bibitem[Calik and Tansel(2013)]{calik2013double}
Hatice Calik and Barbaros~{\c{C}} Tansel.
\newblock Double bound method for solving the $p$-center location problem.
\newblock \emph{\textit{Computers \& Operations Research}}, 40\penalty0
  (12):\penalty0 2991--2999, 2013.

\bibitem[Calik et~al.(2015)Calik, Labb{\'e}, and Yaman]{calik2015p}
Hatice Calik, Martine Labb{\'e}, and Hande Yaman.
\newblock $p$-center problems.
\newblock In \emph{\textit{Location science}}, pages 79--92. Springer, 2015.

\bibitem[Callaghan et~al.(2017)Callaghan, Salhi, and Nagy]{CALLAGHAN2017722}
Becky Callaghan, Said Salhi, and Gábor Nagy.
\newblock Speeding up the optimal method of {D}rezner for the $p$-centre
  problem in the plane.
\newblock \emph{\textit{European Journal of Operational Research}},
  257\penalty0 (3):\penalty0 722--734, 2017.

\bibitem[Castro et~al.(2017)Castro, Nasini, and Saldanha-da Gama]{Castro2017}
Jordi Castro, Stefano Nasini, and Francisco Saldanha-da Gama.
\newblock A cutting-plane approach for large-scale capacitated multi-period
  facility location using a specialized interior-point method.
\newblock \emph{\textit{Mathematical Programming}}, 163\penalty0 (1):\penalty0
  411--444, 2017.

\bibitem[Chen and Chen(2009)]{chen2009new}
Doron Chen and Reuven Chen.
\newblock New relaxation-based algorithms for the optimal solution of the
  continuous and discrete $p$-center problems.
\newblock \emph{\textit{Computers \& Operations Research}}, 36\penalty0
  (5):\penalty0 1646--1655, 2009.

\bibitem[Chen and Chen(2013)]{CHEN201336}
Doron Chen and Reuven Chen.
\newblock Optimal algorithms for the $\alpha$-neighbor $p$-center problem.
\newblock \emph{\textit{European Journal of Operational Research}},
  225\penalty0 (1):\penalty0 36--43, 2013.

\bibitem[Chen and Handler(1993)]{ChenHandler1993}
R.~Chen and Y.~Handler.
\newblock The conditional $p$-center problem in the plane.
\newblock \emph{\textit{Naval Research Logistics}}, 40\penalty0 (1):\penalty0
  117--127, 1993.

\bibitem[Contardo et~al.(2019)Contardo, Iori, and Kramer]{contardo2019scalable}
Claudio Contardo, Manuel Iori, and Raphael Kramer.
\newblock A scalable exact algorithm for the vertex $p$-center problem.
\newblock \emph{\textit{Computers \& Operations Research}}, 103:\penalty0
  211--220, 2019.

\bibitem[Cordeau et~al.(2014)Cordeau, Ghiani, and
  Guerriero]{cordeau2014analysis}
Jean-Fran{\c{c}}ois Cordeau, Gianpaolo Ghiani, and Emanuela Guerriero.
\newblock Analysis and branch-and-cut algorithm for the time-dependent
  travelling salesman problem.
\newblock \emph{\textit{Transportation Science}}, 48\penalty0 (1):\penalty0
  46--58, 2014.

\bibitem[Daskin(1995)]{doi:10.1002/9781118032343.ch5}
Mark~S Daskin.
\newblock \emph{Center Problems}, chapter~5, pages 154--197.
\newblock John Wiley and Sons, Ltd, 1995.
\newblock ISBN 9781118032343.
\newblock \doi{10.1002/9781118032343.ch5}.

\bibitem[Daskin(2000)]{daskin2000new}
Mark~S Daskin.
\newblock A new approach to solving the vertex $p$-center problem to
  optimality: algorithm and computational results.
\newblock \emph{\textit{Communications of the Operations Research Society of
  Japan}}, 45\penalty0 (9):\penalty0 428--436, 2000.

\bibitem[Degel et~al.(2015)Degel, Wiesche, Rachuba, and Werners]{degel2015time}
Dirk Degel, Lara Wiesche, Sebastian Rachuba, and Brigitte Werners.
\newblock Time-dependent ambulance allocation considering data-driven
  empirically required coverage.
\newblock \emph{\textit{Health Care Management Science}}, 18\penalty0
  (4):\penalty0 444--458, 2015.

\bibitem[Escudero et~al.(2018)Escudero, Gar{\'i}n, Pizarro, and
  Unzueta]{Escudero2018}
Laureano~F. Escudero, Mar{\'i}a~Araceli Gar{\'i}n, Celeste Pizarro, and
  Aitziber Unzueta.
\newblock On efficient matheuristic algorithms for multi-period stochastic
  facility location-assignment problems.
\newblock \emph{\textit{Computational Optimization and Applications}},
  70\penalty0 (3):\penalty0 865--888, 2018.

\bibitem[Ghiani and Guerriero(2014)]{GRRGHN}
G.~Ghiani and E.~Guerriero.
\newblock A note on the {I}choua, {G}endreau, and {P}otvin (2003) travel time
  model.
\newblock \emph{\textit{Transportation Science}}, 48\penalty0 (3):\penalty0
  458--462, 2014.

\bibitem[G\"{u}den and S\"{u}ral(2019)]{GUDEN2019615}
H\"{u}seyin G\"{u}den and Haldun S\"{u}ral.
\newblock The dynamic $p$-median problem with mobile facilities.
\newblock \emph{\textit{Computers \& Industrial Engineering}}, 135:\penalty0
  615 --627, 2019.

\bibitem[Hakimi(1964)]{hakimi1964optimum}
S~Louis Hakimi.
\newblock Optimum locations of switching centers and the absolute centers and
  medians of a graph.
\newblock \emph{\textit{Operations Research}}, 12\penalty0 (3):\penalty0
  450--459, 1964.

\bibitem[Ichoua et~al.(2003)Ichoua, Gendreau, and Potvin]{ichoua2003vehicle}
Soumia Ichoua, Michel Gendreau, and Jean-Yves Potvin.
\newblock Vehicle dispatching with time-dependent travel times.
\newblock \emph{\textit{European Journal of Operational Research}},
  144\penalty0 (2):\penalty0 379--396, 2003.

\bibitem[Irawan et~al.(2016)Irawan, Salhi, and Drezner]{irawan2016hybrid}
Chandra~Ade Irawan, Said Salhi, and Zvi Drezner.
\newblock Hybrid meta-heuristics with {VNS} and exact methods: application to
  large unconditional and conditional vertex {p}-centre problems.
\newblock \emph{\textit{Journal of Heuristics}}, 22\penalty0 (4):\penalty0
  507--537, 2016.

\bibitem[Kariv and Hakimi(1979)]{kariv1979algorithmic}
Oded Kariv and S~Louis Hakimi.
\newblock An algorithmic approach to network location problems. {I}: The
  $p$-centers.
\newblock \emph{\textit{SIAM Journal on Applied Mathematics}}, 37\penalty0
  (3):\penalty0 513--538, 1979.

\bibitem[Kramer et~al.(2020)Kramer, Iori, and
  Vidal]{doi:10.1287/ijoc.2019.0889}
Raphael Kramer, Manuel Iori, and Thibaut Vidal.
\newblock Mathematical models and search algorithms for the capacitated
  p-center problem.
\newblock \emph{INFORMS Journal on Computing}, 32\penalty0 (2):\penalty0
  444--460, 2020.
\newblock \doi{10.1287/ijoc.2019.0889}.

\bibitem[Malandraki and Daskin(1992)]{malandraki1992}
C.~Malandraki and M.S. Daskin.
\newblock {Time dependent vehicle routing problems: Formulations, properties
  and heuristic algorithms}.
\newblock \emph{Transportation Science}, 26:\penalty0 185--200, 1992.

\bibitem[Minieka(1970)]{minieka1970m}
Edward Minieka.
\newblock The $m$-center problem.
\newblock \emph{\textit{Siam Review}}, 12\penalty0 (1):\penalty0 138--139,
  1970.

\bibitem[Mladenovi\'{c} et~al.(2003)Mladenovi\'{c}, Labb\'{e}, and
  Hansen]{doi:10.1002/net.10081}
Nenad Mladenovi\'{c}, Martine Labb\'{e}, and Pierre Hansen.
\newblock Solving the $p$-center problem with tabu search and variable
  neighborhood search.
\newblock \emph{\textit{Networks}}, 42\penalty0 (1):\penalty0 48--64, 2003.

\bibitem[Nickel and Saldanha~da Gama(2015)]{nickel2015multi}
Stefan Nickel and Francisco Saldanha~da Gama.
\newblock Multi-period facility location.
\newblock In \emph{\textit{Location science}}, pages 289--310. Springer, 2015.

\bibitem[Raghavan et~al.(2019)Raghavan, Sahin, and Salman]{RAGHAVAN2019507}
S.~Raghavan, Mustafa Sahin, and F.~Sibel Salman.
\newblock The capacitated mobile facility location problem.
\newblock \emph{\textit{European Journal of Operational Research}},
  277\penalty0 (2):\penalty0 507--520, 2019.

\bibitem[Rajagopalan et~al.(2008)Rajagopalan, Saydam, and
  Xiao]{RAJAGOPALAN2008814}
Hari~K Rajagopalan, Cem Saydam, and Jing Xiao.
\newblock A multiperiod set covering location model for dynamic redeployment of
  ambulances.
\newblock \emph{\textit{Computers \& Operations Research}}, 35\penalty0
  (3):\penalty0 814--826, 2008.

\bibitem[Repede and Bernardo(1994)]{REPEDE1994567}
John~F. Repede and John~J. Bernardo.
\newblock Developing and validating a decision support system for locating
  emergency medical vehicles in {L}ouisville, {K}entucky.
\newblock \emph{\textit{European Journal of Operational Research}}, 75\penalty0
  (3):\penalty0 567--581, 1994.

\bibitem[Schmid and Doerner(2010)]{schmid2010ambulance}
Verena Schmid and Karl~F Doerner.
\newblock Ambulance location and relocation problems with time-dependent travel
  times.
\newblock \emph{\textit{European Journal of Operational Research}},
  207\penalty0 (3):\penalty0 1293--1303, 2010.

\bibitem[Tansel(2011)]{tansel2011discrete}
Barbaros~{\c{C}} Tansel.
\newblock Discrete center problems.
\newblock In \emph{\textit{Foundations of Location Analysis}}, pages 79--106.
  Springer, 2011.

\bibitem[van~den Berg and Aardal(2015)]{VANDENBERG2015383}
Pieter~L. van~den Berg and Karen Aardal.
\newblock Time-dependent {MEXCLP} with start-up and relocation cost.
\newblock \emph{\textit{European Journal of Operational Research}},
  242\penalty0 (2):\penalty0 383--389, 2015.

\end{thebibliography}





\end{document}